\newcommand{\bigdsum}{\bigoplus}
\newcommand{\card}[1]{\lvert #1 \rvert}
\let\chisave\chi
\renewcommand{\chi}{{%
 \mathchoice{\raisebox{0.25ex}{$\displaystyle\chisave$}}
            {\raisebox{0.2ex}{$\textstyle\chisave$}}
            {\raisebox{0.2ex}{$\scriptstyle\chisave$}}
            {\raisebox{0.1ex}{$\scriptscriptstyle\chisave$}}}}
\newcommand{\complex}{\ensuremath \mathbb{C}}
\newcommand{\cross}{\times}
\newcommand{\curly}[1]{\ensuremath{\mathcal{#1}}}
\newcommand{\defeq}{\stackrel{\scriptscriptstyle{\mathrm{def}}}{=}}
\let\degsave\deg
\renewcommand{\deg}{\underline{\degsave}}
\newcommand{\dimvec}{\ensuremath \mbox{\underline{dim}}}
\newcommand{\dsum}{\ensuremath{ \oplus}}
\newcommand{\End}[2]{\ensuremath \mbox{End}_{#1}(#2)}
\renewcommand{\epsilon}{\varepsilon}
\newcommand{\ind}[2]{\text{\underline{ind}}_{#1}(#2)}
\newcommand{\integ}{\ensuremath{\mathbb{Z}}}
\newcommand{\iso}{\ensuremath \cong}
\newcommand{\op}[1]{{#1}^{\mbox{\scriptsize \textup{op}}}}
\renewcommand{\phi}{\varphi}
\newcommand{\real}{\ensuremath \mathbb{R}}
\newcommand{\transpose}[1]{{#1}^{T}}
\renewcommand{\leq}{\leqslant}
\renewcommand{\geq}{\geqslant}
\theoremstyle{plain}
\newtheorem{theorem}{Theorem}[section]
\newtheorem*{theorem*}{Theorem}
\newtheorem{proposition}[theorem]{Proposition}
\newtheorem{lemma}[theorem]{Lemma}
\newtheorem{corollary}[theorem]{Corollary}
\theoremstyle{definition}
\newtheorem{definition}[theorem]{Definition}
\theoremstyle{remark}
\newtheorem{remark}[theorem]{Remark}
\newtheorem{example}[theorem]{Example}
\newtheorem*{example*}{Example}
\newtheorem*{examplectd*}{Example (continued)}
\title{Graded cluster algebras}
\author{Jan E. Grabowski\footnotemark[2] 
\\ \small{\textit{Department of Mathematics and Statistics, Lancaster University,}}
\\ \small{\textit{Lancaster, LA1 4YF, United Kingdom}}
}
\date{18th June 2015}
\begin{document}

\maketitle

\renewcommand{\thefootnote}{\fnsymbol{footnote}}
\footnotetext[2]{Email: \url{j.grabowski@lancaster.ac.uk}.  Website: \url{http://www.maths.lancs.ac.uk/~grabowsj/}}
\renewcommand{\thefootnote}{\arabic{footnote}}
\setcounter{footnote}{0}

\begin{abstract} In the cluster algebra literature, the notion of a graded cluster algebra has been implicit since the origin of the subject.  In this work, we wish to bring this aspect of cluster algebra theory to the foreground and promote its study.

We transfer a definition of Gekhtman, Shapiro and Vainshtein to the algebraic setting, yielding the notion of a multi-graded cluster algebra.  We then study gradings for finite type cluster algebras without coefficients, giving a full classification.

Translating the definition suitably again, we obtain a notion of multi-grading for (generalised) cluster categories.  This setting allows us to prove additional properties of graded cluster algebras in a wider range of cases.  We also obtain interesting combinatorics---namely tropical frieze patterns---on the Auslander--Reiten quivers of the categories.

\vspace{1em}
\noindent MSC (2010): 13F60 (Primary), 18E30, 16G70 (Secondary)

\end{abstract}

\tableofcontents

\vfill

\pagebreak

\section{Introduction}

Gradings for cluster algebras have been introduced in various ways by a number of authors and for a number of purposes.  The evolution of the notion started with the foundational work of Fomin and Zelevinsky (\cite{FZ-CA1}), who consider $\integ^{n}$-gradings where $n$ is precisely the rank of the cluster algebra.  

Gekhtman, Shapiro and Vainshtein have also given a definition of a multi-graded cluster algebra more generally, in the dual language of toric actions (\cite[Section~5.2]{GSV-Book}).  In the case that the underlying field $\mathbb{F}$ is $\real$ or $\complex$, they discuss a toric action of $(\mathbb{F}^{\ast})^{r}$ determined by a choice of integer weight.  Their Lemma~5.3 states a necessary and sufficient condition that a local toric action on a seed extends to a global one on the associated cluster algebra.

Berenstein and Zelevinsky (\cite[Definition~6.5]{BZ-QCA}) have given a definition of graded quantum seeds, which give rise to module gradings but not algebra gradings.  Then in work with St\'{e}phane Launois (\cite{GradedQCAs}), in which we proved that the quantum versions of homogeneous coordinate rings of Grassmannians are quantum cluster algebras, we independently introduced the notion of a $\integ$-grading for a quantum cluster algebra and noted that the definition also applies to the classical commutative case.

In the first part of this note, we briefly set out the definition of a $\integ^{d}$-graded seed and cluster algebra, in the algebraic setting.  Since the proofs reduce to the $\integ$-graded case, given in detail in this language in \cite{GradedQCAs}, we omit these.  These definitions and results are equivalent to those of Gekhtman--Shapiro--Vainshtein, though the toric action setting suggests a different set of associated questions and we recommend Chapter~5 of \cite{GSV-Book} to readers interested in the more geometrical aspects.  We will concentrate only on algebraic and combinatorial aspects here.

We wish to promote the use of gradings in cluster algebra theory and to show that there are interesting questions and especially combinatorial phenomena associated with gradings.  To do this, we consider the usual starting case of finite type cluster algebras without coefficients.  In this case, we can give a complete classification of the gradings that occur.  In particular we observe that the gradings we obtain are all \emph{balanced}, that is, there is a bijection between the set of variables of degree $\underline{d}$ and those of degree $-\underline{d}$.

Next we introduce the notion of a graded (generalised) cluster category.  The idea of the definition is the same as previously: to associate an integer vector (the multi-degree) to an object in the category in such a way that the vectors are additive on distinguished triangles and transform naturally under mutation.  This is done via the key fact that every object in a generalised cluster category has a well-defined index; in order to satisfy the aforementioned two properties, degrees are necessarily linear functions of the index.

In finite types, indices are very closely related to both dimension vectors and almost positive roots---it is not surprising that the combinatorics of cluster algebra gradings should be closely related to the latter.  However the theory of generalised cluster categories goes very far beyond finite type.

The categorical approach has the advantage that it encapsulates the global cluster combinatorics, or more accurately the set of indices does.  Another consequence is an explanation for the observed balanced gradings in finite type: we show that the auto-equivalence of the cluster category given by the shift functor induces an automorphism of the set of cluster variables that reverses signs of degrees.  Hence any cluster algebra admitting a cluster categorification necessarily has all its gradings being balanced, for example finite type or, more generally, acyclic cluster algebras having no coefficients.

Two of the highlights of the resulting analysis are firstly the close link between the gradings on the cluster algebra and the representation theory encoded in the associated cluster category, and secondly the emergence of a combinatorial pattern called a tropical frieze on the Auslander--Reiten quiver of the cluster category, arising from the degrees of the cluster variables.  For us, this illustrates how deeply integrated into the theory of cluster algebras gradings are.

The structure of this paper is as follows.  We begin with a brief exposition of the definition of a (multi-)grading for a cluster algebra and some associated lemmas (Section~\ref{s:gradedCAs}).  We then classify gradings for coefficient-free cluster algebras of finite type (Section~\ref{s:gradings-in-finite-type}), using a result of Fomin and Zelevinsky on Laurent expressions for cluster variables in finite type.  (This encompasses all finite type cases, not just the simply laced ones.)

We then turn to cluster categories and show that we can introduce the multi-gradings at the categorical level (Section~\ref{s:graded-cluster-categories}).  As a consequence of the graded cluster category setting, we obtain so-called tropical friezes on these categories and, when the generalized cluster category comes from a derived category, on that derived category too.  This is explained and illustrated in Section~\ref{s:tropical-friezes}.

We conclude with some remarks in Section~\ref{s:homogenisation} on how one may add coefficients to an initial seed that does not admit a grading, so that the new seed does---that is, how to homogenise a cluster algebra.

\subsection*{Acknowledgements}

The author would particularly like to thank St\'{e}phane Launois for many helpful discussions throughout the collaboration that this work originated from, Robert Marsh for useful references regarding cluster categories and Thomas Booker-Price for permission to reproduce the results of his calculations in Sections~\ref{ss:gradings-in-type-B} and \ref{ss:gradings-in-type-C}.  We also thank numerous colleagues with whom preliminary versions of these results were discussed following their presentation at various institutions.

\section{Preliminaries}

The construction of a cluster algebra of geometric type from an initial seed $(\underline{x},B)$, due to Fomin and Zelevinsky (\cite{FZ-CA1}), is now well-known.  Here $\underline{x}$ is a transcendence base for a certain field of fractions of a polynomial ring and $B$ is a skew-symmetrizable integer matrix; in the skew-symmetric case $B$ is often replaced by the quiver $Q=Q(B)$ it defines in the natural way.  For simplicity, we consider our base field to be $\mathbb{Q}$.

We refer the reader who is unfamiliar with this construction to the survey of Keller (\cite{Keller-CASurvey}) and the book of Gekhtman, Shapiro and Vainshtein (\cite{GSV-Book}) for an introduction to the topic and summaries of the main related results in this area.

We set some notation for later use.  For $k$ a mutable index, set
\begin{align*}
\underline{b}_{k}^{+} & = -\underline{\boldsymbol{e}}_{k}+\sum_{b_{ik}>0}b_{ik}\underline{\boldsymbol{e}}_{i} \qquad \text{and} \\
\underline{b}_{k}^{-} & = -\underline{\boldsymbol{e}}_{k}-\sum_{b_{ik}<0}b_{ik}\underline{\boldsymbol{e}}_{i}
\end{align*}
where the vector $\underline{\boldsymbol{e}}_{i}\in \integ^{r}$ ($r$ being the number of rows of $B$) is the $i$th standard basis vector.  Note that the $k$th row of $B$ may be recovered as $B_{k}=\underline{b}_{k}^{+}-\underline{b}_{k}^{-}$.

Then given a cluster $\underline{x}=(X_{1},\ldots,X_{r})$ and exchange matrix $B$, the exchange relation for mutation in the direction $k$ is given by
\[ X_{k}^{\prime}=\underline{x}^{\underline{b}_{k}^{+}}+\underline{x}^{\underline{b}_{k}^{-}} \]
where for $\underline{a}=(a_{1},\dotsc ,a_{r})$ we set
\[ \underline{x}^{\underline{a}} = \prod_{i=1}^{r} X_{i}^{a_{i}}. \]

Later we will briefly discuss cluster algebras with coefficients (also called frozen variables).  That is, we designate some of the elements of the initial cluster to be mutable (i.e.\ we are allowed to mutate these) and the remainder to be non-mutable.  We will also talk about the corresponding indices for the variables as being mutable or not; in \cite{BZ-QCA} the former are referred to as exchangeable indices.  The rank of the cluster algebra is the number of \emph{mutable} variables in a cluster; we will refer to the total number of variables, mutable and not, as the cardinality of the cluster.

We will retain the usual convention that $B$ will be a matrix with rows indexed by the initial cluster variables and columns indexed by the mutable initial cluster variables.  The matrix $B_{\text{mut}}$ obtained by taking only the rows of $B$ corresponding to mutable variables is the principal part of $B$.

%Note that we will adopt the convention that $B$ will be a \emph{square} matrix---in the literature it is more common to let $B$ have as column indices just the mutable indices (\cite{GSV-Book} adopts the transpose convention, of having the rows indexed by the mutable indices).  At some points, we will need the submatrix $B_{\text{mut}}$ of $B$ given by taking only the columns of $B$ with mutable indices, or the transpose of this.  The matrix $B_{\text{mut}}$ is often referred to as the extended exchange matrix and \emph{its} submatrix $B_{\text{mut}}^{\textrm{mut}}$ with row set also the mutable indices is what is usually called the principal part of $B$.  Our square matrix $B$ is simply the ``skew-symmetric extension'' of $B_{\text{mut}}$, i.e. completing $B_{\text{mut}}$ to a square matrix in the unique way so that $B$ is skew-symmetric and so that if $i$ and $j$ are non-mutable indices then $b_{ij}=0$.  (The latter choice accords with the convention that the exchange quiver has no arrows between frozen vertices.)

\section{Multi-graded seeds and cluster algebras}\label{s:gradedCAs}

The natural definition for a multi-graded seed is as follows.

\begin{definition} A multi-graded seed is a triple $(\underline{x},B,G)$ such that
\begin{enumerate}[label=(\alph*)]
\item $(\underline{x}=(X_{1},\dotsc ,X_{r}),B)$ is a seed of cardinality $r$ and
\item $G$ is an $r\cross d$ integer matrix such that $B^{T}G=0$.
\end{enumerate}
\end{definition}

From now on, we use the term ``graded'' to encompass multi-graded; if the context is unclear, we will say $\integ^{d}$-graded.  

The above data defines $\deg_{G}(X_{i})=G_{i}$ (the $i$th row of $G$) and this can be extended to rational expressions in the generators $X_{i}$ in the obvious way.  We also need to be able to mutate our grading, which we do via the matrix $E$ (denoted $E_{+}$ in \cite{BZ-QCA}) that encodes mutation of $B$:
\[ E_{rs}=
\begin{cases} 
\delta_{rs} & \text{if}\ s\neq k; \\
-1 & \text{if}\ r=s=k; \\
\max(0,-b_{rk}) & \text{if}\ r\neq s=k.
\end{cases}
\]
Then we have that $B^{\prime}=EB\transpose{E}$.  Setting $G^{\prime}=\transpose{E}G$, it is straightforward to verify that the $i$th row of $G^{\prime}$ is given by
\[ G_{i}^{\prime} = \begin{cases} G_{i} & \text{if}\ i\neq k \\ (\underline{b}_{k}^{-})^{T}G & \text{if}\ i=k \end{cases}. \]
and furthermore $(B^{\prime})^{T}G^{\prime}=0$ so that $(\underline{x}^{\prime},B^{\prime},G^{\prime})$ is again a graded seed.

Note that we have that if $Y\in \underline{x}'$, $\deg_{G}(Y)=\deg_{G'}(Y)$ by definition: the degree of $Y$ with respect to $G$ is precisely the $k$th row of $G'$, which is also the degree of $Y$ viewed as an element of the graded seed $(\underline{x}',B',G')$.

Then we see that repeated mutation propagates a grading on an initial seed to every cluster variable and hence to the associated cluster algebra, as every exchange relation is homogeneous.

\begin{proposition}\label{p:gradedCA} The cluster algebra $\curly{A}(\underline{x},B,G)$ associated to an initial graded seed $(\underline{x},B,G)$, with $G$ an $r\cross d$ integer matrix, is a $\integ^{d}$-graded algebra. Every cluster variable of $\curly{A}(\underline{x},B,G)$ is homogeneous with respect to this grading.  \qed
\end{proposition}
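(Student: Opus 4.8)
The plan is to read off the conclusion from the facts already assembled, the decisive one being that the defining condition $B^{T}G=0$ is precisely the assertion that each exchange relation is homogeneous. First I would extend $\deg_{G}$ to monomials by $\deg_{G}(\underline{x}^{\underline{a}})=\underline{a}^{T}G$, so that the two terms on the right of the exchange relation $X_{k}^{\prime}=\underline{x}^{\underline{b}_{k}^{+}}+\underline{x}^{\underline{b}_{k}^{-}}$ have degrees $(\underline{b}_{k}^{+})^{T}G$ and $(\underline{b}_{k}^{-})^{T}G$ respectively. Recalling that $B_{k}=\underline{b}_{k}^{+}-\underline{b}_{k}^{-}$, the $k$th row of $B^{T}G$ equals $B_{k}^{T}G=(\underline{b}_{k}^{+})^{T}G-(\underline{b}_{k}^{-})^{T}G$; hence $B^{T}G=0$ forces these two degrees to coincide, so $X_{k}^{\prime}$ is homogeneous of degree $(\underline{b}_{k}^{-})^{T}G$, which is exactly the mutated row $G_{k}^{\prime}$.

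I would then run an induction on the length of a shortest mutation sequence reaching a given seed from the initial one. The base case is the definition $\deg_{G}(X_{i})=G_{i}$. For the inductive step, the discussion preceding the statement shows that mutation sends the graded seed $(\underline{x},B,G)$ to another graded seed $(\underline{x}^{\prime},B^{\prime},G^{\prime})$, with $(B^{\prime})^{T}G^{\prime}=0$, and preserves the degrees of the surviving variables, $\deg_{G}(Y)=\deg_{G^{\prime}}(Y)$ for $Y\in\underline{x}^{\prime}$. Applying the homogeneity computation of the first paragraph at $(\underline{x}^{\prime},B^{\prime},G^{\prime})$ shows that each further mutation produces a homogeneous variable. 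Since every cluster variable arises from the initial seed by a finite mutation sequence, every cluster variable is homogeneous.

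The one step needing genuine care, and the one I expect to be the main obstacle, is well-definedness of the assigned degree: a cluster variable can typically be reached by several inequivalent mutation sequences, and I must check that it receives the same multi-degree along each. This is exactly what the compatibility $\deg_{G}(Y)=\deg_{G^{\prime}}(Y)$ secures, since it lets one transport the degree computed along any path back to the fixed initial grading $G$; together with the Laurent phenomenon, which realises each cluster variable as a single well-defined element of the ambient field of fractions, this assigns an unambiguous element of $\integ^{d}$ to each cluster variable.

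Finally I would conclude that $\curly{A}(\underline{x},B,G)$, being generated as a $\mathbb{Q}$-algebra by its cluster variables, is generated by homogeneous elements and is therefore $\integ^{d}$-graded, yielding the decomposition into homogeneous components. The second assertion, that every cluster variable is homogeneous for this grading, has already been established in the course of the induction.
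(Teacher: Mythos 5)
Your argument is correct and is essentially the paper's own: the paper omits a formal proof (deferring to the $\integ$-graded case in the cited reference), but the discussion immediately preceding the proposition---that $B^{T}G=0$ makes every exchange relation homogeneous, that mutation sends graded seeds to graded seeds with $\deg_{G}(Y)=\deg_{G'}(Y)$ on surviving variables, and that the grading therefore propagates by induction on mutation sequences---is exactly what you have written out. Your extra care over well-definedness via the Laurent phenomenon is a sensible explicit addition but not a departure from the intended argument.
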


\begin{remark} If we have a grading $G$, each column of $G$ is itself a cluster algebra grading.  Similarly, every $J\subseteq \{1,\dotsc ,d\}$ gives rise to a $\integ^{\card{J}}$-grading, by taking the submatrix of $G$ on the column subset $J$.

Also notice that $d$ is independent of $r$: every cluster algebra admits a $\integ^{d}$-grading for any $d\geq 1$, namely taking $G$ to be the $r\cross d$ zero matrix.  Equally, if $H\in \integ^{r}$ defines a non-zero $\integ$-grading, so does $H^{(d)}$, the matrix with $d$ columns all equal to $H$, again for any $d$.
\end{remark}

\begin{remark} From the definition of a grading, we see that the existence of a grading is a linear algebra problem: if $B$ has rank equal to the number of mutable indices\footnote{That is, if the (row) rank of the matrix $B$ equals the rank of the cluster algebra---an unfortunate coincidence of terminology.}, the only solution is the zero grading $\underline{0}$, assigning degree $0$ to every cluster variable.

Classification of gradings for a particular $B$ is also a linear algebra problem, of finding a basis for the kernel in the case that the rank is not maximal.  (Here, and below, ``kernel'' refers to the kernel of the map of free abelian groups induced by $B$.  We will also say e.g.\ $\dim \ker B$ rather than the more usual group-theoretic term ``rank'', to avoid further overloading that word.)

However it will in general be difficult to find the degrees of \emph{every} cluster variable, especially in infinite types.  In finite types, one can reasonably expect to solve this problem and we will do so in two ways, one algebraic and one categorical.
\end{remark}

\begin{remark} For some cluster algebra problems, the presence or absence of coefficients does not play a large part and the phenomena seen are determined by the cluster algebra type.  This is not the case for gradings, though.  As the examples later will illustrate, adding or removing coefficients can radically change the gradings that can exist.  This is to be expected: adding coefficients increases the number of rows of the associated exchange matrix and this can impact on the rank and hence the solutions $G$ that are the grading vectors.
\end{remark}

We conclude this section by recording some elementary results on a particular class of gradings which, as we see, essentially contain information about every possible grading.

\begin{definition}  Let $(\underline{x},B)$ be a seed.  We call a multi-grading $G$ whose columns are a basis for the kernel of $B$ a \emph{standard} multi-grading, and call $(\underline{x},B,G)$ a standard graded seed.
\end{definition}

\begin{lemma}\label{l:mut-of-std-is-std} Let $\Sigma=(\underline{x},B,G)$ be a standard graded seed.  Then any mutation of $\Sigma$, say $\Sigma'=(\underline{x}',B',G')$, is again a standard graded seed.  Hence any graded seed that is mutation equivalent to a standard graded seed is itself standard.
\end{lemma}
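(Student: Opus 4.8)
The plan is to reduce the whole statement to one fact: the mutation matrix $E$ is invertible over $\integ$, and mutation of the grading is implemented by the associated automorphism $E^{T}$ of $\integ^{r}$. Throughout, write $\ker B = \{v\in\integ^{r} : B^{T}v = 0\}$ for the kernel appearing in the definition of a standard grading, so that ``$G$ standard'' means precisely that the columns of $G$ form a $\integ$-basis of $\ker B$; write $\ker B'$ for the analogous subgroup of the mutated seed. Note that the containment of the columns of $G'$ in $\ker B'$ is not in question: the identity $(B')^{T}G'=0$ was already verified in the discussion preceding Proposition~\ref{p:gradedCA}. The content of the lemma is the stronger assertion that these columns constitute a full $\integ$-basis, and this is where care is required.

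First I would record that $E\in\mathrm{GL}_{r}(\integ)$. This is immediate from the explicit description of $E$: it agrees with the identity matrix outside its $k$th column, so expanding along that column gives $\det E = E_{kk} = -1$. In fact a short direct computation shows $E^{2}=I$, so that $E=E^{-1}$ and hence also $E^{T}=(E^{T})^{-1}$, i.e.\ $E^{T}$ is an involution. Consequently $v\mapsto E^{T}v$ is an automorphism of the free abelian group $\integ^{r}$. I want to stress that this integrality is the crux of the argument: it is exactly what guarantees that the image of a $\integ$-basis is again a $\integ$-basis, rather than merely a $\mathbb{Q}$-basis or a basis of a proper finite-index subgroup.

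Next I would identify $\ker B'$ with $E^{T}(\ker B)$. From $B'=EB E^{T}$ one gets $(B')^{T}=EB^{T}E^{T}$; since the left factor $E$ is invertible it may be dropped from the kernel, giving $\ker B' = \ker\!\big(B^{T}E^{T}\big) = (E^{T})^{-1}(\ker B) = E^{T}(\ker B)$, the last equality using that $E^{T}$ is an involution. Equivalently, one checks directly that $(B')^{T}E^{T}v = E\,B^{T}v$ (using $E^{T}E^{T}=I$), so that $E^{T}v$ solves the mutated homogeneity system if and only if $v$ solves the original one. Either way, the automorphism $E^{T}$ restricts to a group isomorphism $\ker B \xrightarrow{\;\sim\;} \ker B'$.

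Finally, since $G$ is standard its columns form a $\integ$-basis of $\ker B$; applying the isomorphism $E^{T}$, their images---which are exactly the columns of $E^{T}G=G'$---form a $\integ$-basis of $\ker B'$. Hence $G'$ is a standard grading and $\Sigma'$ a standard graded seed. The closing assertion then follows by an immediate induction: mutation equivalence means connection by a finite sequence of single mutations, and each such step preserves standardness by what has just been shown. The only genuine obstacle is the bookkeeping around integrality in the second and third steps; the essential input is the unimodularity of $E$ (equivalently $E^{2}=I$), which upgrades ``spans over $\mathbb{Q}$'' to ``is a basis of the integral kernel'' and makes the image of a basis a basis on the nose rather than up to finite index.
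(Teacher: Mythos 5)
Your proof is correct, and it uses the same essential ingredients as the paper's --- the unimodularity of $E$ and the involution $E^{2}=I$ --- but it deploys them along a slightly different route. The paper argues by counting: the columns of $G'$ lie in $\ker B'$, invertibility of $E$ preserves the column rank of $G$, and $B'=EBE^{T}$ gives $\dim\ker B=\dim\ker B'$, whence the columns of $G'$ form a basis. You instead identify $\ker B'$ outright as the image $E^{T}(\ker B)$ of the original kernel under the automorphism $E^{T}$ of $\integ^{r}$, so that the columns of $G'=E^{T}G$ are the image of a basis under a group isomorphism and hence a basis on the nose. Your version is the tighter of the two on the integrality point you flag: a rank count alone only shows that the columns of $G'$ generate a finite-index sublattice of $\ker B'$, and it is precisely the fact that $E^{T}$ restricts to an isomorphism of the integral kernels (rather than merely of the rational ones) that closes this gap. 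The paper's argument implicitly relies on the same fact via the invertibility of $E$ over $\integ$, but does not spell it out; your formulation makes the lattice-versus-vector-space distinction explicit, which is worth doing. The concluding induction for the final claim is the same in both.
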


\begin{proof} Recall that we have $G'=E^{T}G$ and $(B')^{T}G'=0$.  So the columns of $G'$ certainly belong to $\ker B'$ and since $E$ is invertible, the column rank of $G'$ is equal to that of $G$.  (Indeed $E^{2}$ is the identity, corresponding to matrix mutation being an involution.)  Noting that mutation also preserves rank, since $B'=EBE^{T}$, so that $\dim \ker B=\dim \ker B'$, the columns of $G'$ form a basis for $\ker B'$ as required.

The final claim is immediate.
\end{proof} 

\begin{lemma}\label{l:chg-of-basis-grading} Let $(\underline{x},B,G)$ be a standard graded seed and let $H$ be any grading for $(\underline{x},B)$.  Then there exists an integer matrix $M=M(G,H)$ such that for any cluster variable $Y$ in $\curly{A}(\underline{x},B,H)$ we have 
\[ \deg_{H}(Y)=\deg_{G}(Y)M \]
where on the right-hand side we regard $Y$ as a cluster variable of $\curly{A}(\underline{x},B,G)$ in the obvious way.
\end{lemma}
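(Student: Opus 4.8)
The plan is to manufacture a single integer matrix $M$ converting $G$-degrees into $H$-degrees on the initial seed, and then to show this conversion survives mutation, hence propagates to every cluster variable. First I would exploit the hypothesis that $G$ is \emph{standard}: its columns form a $\integ$-basis of the kernel $\ker B = \{\, g \in \integ^{r} : \transpose{B}g = 0 \,\}$. Because $H$ is also a grading for $(\underline{x},B)$, each column of $H$ lies in this same kernel, and so is a \emph{unique} $\integ$-linear combination of the columns of $G$. Recording these combinations as the columns of an integer matrix $M = M(G,H)$ gives
\[ H = GM, \]
where $M$ has precisely the shape needed for the product $\deg_{G}(Y)\,M$ to be a vector of $H$-degree type; note that $M$ depends only on $G$ and $H$, not on any cluster variable.

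The central step is the observation that mutation of a grading is linear in the grading and is governed by a transition matrix that does not see the grading at all. Indeed, mutation in direction $k$ sends a grading matrix $F$ to $\transpose{E}F$, where $E = E(B,k)$ is determined by $B$ and $k$ alone. Consequently a finite mutation sequence $\mu$, taking $(\underline{x},B)$ to $(\underline{x}^{(\mu)},B^{(\mu)})$, acts on \emph{every} grading for $(\underline{x},B)$ by left multiplication by one and the same integer matrix $P = P(\mu)$, namely the ordered product of the relevant matrices $\transpose{E}$. In particular $G^{(\mu)} = PG$ and $H^{(\mu)} = PH$, whence
\[ H^{(\mu)} = PH = P(GM) = (PG)M = G^{(\mu)}M. \]
I would set this out formally as a short induction on the length of $\mu$: the base case is $H = GM$, and the inductive step is immediate from the rule $F \mapsto \transpose{E}F$ applied to both $G^{(\mu)}$ and $H^{(\mu)}$.

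To conclude, let $Y$ be any cluster variable of $\curly{A}(\underline{x},B,H)$ and pick a mutation sequence $\mu$ after which $Y$ sits in the cluster $\underline{x}^{(\mu)}$, say as its $j$th entry. Then $\deg_{H}(Y)$ is the $j$th row of $H^{(\mu)}$, while $\deg_{G}(Y)$, computed for the same $Y$ regarded in $\curly{A}(\underline{x},B,G)$, is the $j$th row of $G^{(\mu)}$; here I use the earlier remark that the degree of a cluster variable is independent of the seed in which it is read off. Extracting $j$th rows from $H^{(\mu)} = G^{(\mu)}M$ then yields $\deg_{H}(Y) = \deg_{G}(Y)\,M$. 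I anticipate no real obstacle here: the one point demanding care is the \emph{integrality} of $M$, which is guaranteed precisely by the standardness of $G$ (a genuine $\integ$-basis of the kernel, rather than merely a $\mathbb{Q}$-spanning set), together with the easy but essential fact that the grading-mutation rule is independent of the grading.
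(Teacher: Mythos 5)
Your proposal is correct and follows essentially the same route as the paper: establish $H=GM$ from the standardness of $G$ (integrality of $M$ coming from the columns of $G$ being a $\integ$-basis of $\ker B$), then propagate via the grading-independent mutation rule $F\mapsto \transpose{E}F$ by induction on the mutation sequence. The only cosmetic difference is that you package the induction as left multiplication by a single product matrix $P$, whereas the paper verifies the single-mutation step $(\transpose{E}GM)_{k}=(\transpose{E}G)_{k}M$ directly.
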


\begin{proof} Since $G$ is standard, its columns are a basis for the kernel of $B$.  Furthermore every column of $H$ belongs to $\ker B$ since $H$ is a grading, and so there exists a matrix $M$ encoding the expression of the columns of $H$ in the basis of columns of $G$, i.e. $H=GM$.  Hence if $Y$ is an initial cluster variable, i.e. $Y\in \underline{x}$, we have the result.

It then suffices to show that the result remains true under mutation, and the full statement will follow by induction.  Let $X_{k}'=\mu_{k}(\underline{x},B)$ be the mutation of the seed $(\underline{x},B)$ in the direction $k$ and let $E$ be the associated matrix as above.  Then 
\[ \deg_{H}(X_{k}')=(H')_{k}=(E^{T}H)_{k}=(E^{T}GM)_{k}=(E^{T}G)_{k}M=(G')_{k}M=\deg_{G}(X_{k}')M \]
as required.
\end{proof}

Therefore, to describe the degree of a cluster variable of a graded cluster algebra $\curly{A}(\underline{x},B,H)$, it suffices to know its degree with respect to some standard grading $G$ and the matrix $M=M(G,H)$ transforming $G$ to $H$.  In particular, to understand the distribution of the degrees of cluster variables, it suffices to know this for standard gradings.

Since the lemma applies in the particular case when $G$ and $H$ are both standard, we see that from one choice of basis for the kernel of $B$, we obtain complete information.  For if we chose a second basis, the change of basis matrix tells us how to transform the degrees.  Hence up to a change of basis, there is essentially only one standard grading for each seed.  

\begin{corollary} Let $(\underline{x},B)$ and $(\underline{y},C)$ be mutation equivalent seeds in a cluster algebra $\curly{A}$ and denote by $\mu_{\bullet}$ a composition of (seed) mutations such that $\mu_{\bullet}(\underline{x},B)=(\underline{y},C)$.  Let $G$ be a standard grading for $(\underline{x},B)$ and $H$ any grading for $(\underline{y},C)$.  

Then there exists an integer matrix $M$ such that for every cluster variable $Y$ of $\curly{A}$ we have
\[ \deg_{H}(Y)=\deg_{G}(Y)M. \]
\end{corollary}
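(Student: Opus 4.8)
The plan is to reduce the statement to Lemma~\ref{l:chg-of-basis-grading}, which already treats the case where the standard grading and the arbitrary grading live on the \emph{same} seed. The one extra feature here is that $G$ grades $(\underline{x},B)$ while $H$ grades the mutation-equivalent seed $(\underline{y},C)$, so the first task is to move $G$ across to $(\underline{y},C)$ using $\mu_{\bullet}$.

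First I would apply to the graded seed $(\underline{x},B,G)$ the grading mutations corresponding to $\mu_{\bullet}$, that is, the successive applications of the rule $G\mapsto E^{T}G$, obtaining a graded seed $(\underline{y},C,G')$. By Lemma~\ref{l:mut-of-std-is-std}, since $G$ is standard and standardness is preserved under mutation, $G'$ is a standard grading for $(\underline{y},C)$.

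The key point I would then invoke is the invariance of degrees recorded in the discussion preceding Proposition~\ref{p:gradedCA}: mutating the grading matrix does not change the degree assigned to any cluster variable, it merely re-records it as a row of the new matrix. Hence $\deg_{G}(Y)=\deg_{G'}(Y)$ for every cluster variable $Y$ of $\curly{A}$, so the two degree functions agree everywhere.

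Finally, $G'$ and $H$ are now both gradings for the single seed $(\underline{y},C)$ with $G'$ standard, so Lemma~\ref{l:chg-of-basis-grading} yields an integer matrix $M=M(G',H)$ satisfying $\deg_{H}(Y)=\deg_{G'}(Y)M$ for every cluster variable $Y$; substituting $\deg_{G'}(Y)=\deg_{G}(Y)$ gives the claim. I do not expect a real obstacle, as all the substance is in the two preceding lemmas; the only point needing care is to apply the degree-invariance correctly, so that the matrix $M$, which is constructed from gradings on $(\underline{y},C)$, may legitimately be applied to degrees computed from the grading $G$ on $(\underline{x},B)$.
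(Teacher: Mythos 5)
Your proposal is correct and follows essentially the same route as the paper's own proof: transport $G$ to $(\underline{y},C)$ via $E_{\bullet}^{T}$, invoke Lemma~\ref{l:mut-of-std-is-std} to see the result is standard, apply Lemma~\ref{l:chg-of-basis-grading} to the two gradings now on the same seed, and finish with the observation that mutating the grading matrix does not change the degrees of cluster variables.
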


\begin{proof}  Let $E_{\bullet}$ be the product of the matrices $E$ associated to $\mu_{\bullet}$.  Then $E_{\bullet}BE_{\bullet}^{T}=C$ and $(\underline{y},C,E_{\bullet}^{T}G)$ is a grading for the seed $(\underline{y},C)$.  Now $(\underline{y},C,E_{\bullet}^{T}G)$ is standard by Lemma~\ref{l:mut-of-std-is-std} and so we may apply Lemma~\ref{l:chg-of-basis-grading} to $(\underline{y},C,E_{\bullet}^{T}G)$ and $(\underline{y},C,H)$.  

That is, there exists $M$ such that $\deg_{H}(Y)=\deg_{E_{\bullet}^{T}G}(Y)M$.  But as we noted earlier $\deg_{E_{\bullet}G}(Y)$ is equal to $\deg_{G}(Y)$: the degree of $Y$ in the graded seed $(\underline{y},C,E_{\bullet}^{T}G)$ is by definition the degree of $Y$ with respect to the grading propagated from the initial graded seed $(\underline{x},B,G)$.  Hence we have that $\deg_{H}(Y)=\deg_{G}(Y)M$ as required. 
\end{proof}

Notice that $M$ is mutation invariant: once we know the respective grading matrices $H'$ and $G'$ for the \emph{same} seed (after some mutations from $(\underline{x},B,G)$ and $(\underline{y},C,H)$), $M$ is easily calculated from $H'=G'M$.  This same $M$ then compares the respective grading matrices at any seed, or indeed we can compare gradings at different seeds via the matrix $E_{\bullet}$.

In the next section, our goal is to classify gradings in finite types (with no coefficients) in the following sense: for an initial seed $(\underline{x},B)$ of finite type, we find a standard grading and establish the number of cluster variables in each degree for this $G$.  The main consequence of the above results is that the resulting distribution is essentially independent of the choices of seed and standard grading.

\vfill
\pagebreak

\section{Gradings in finite type with no coefficients}\label{s:gradings-in-finite-type}

In finite types, we have not only the Laurent phenomenon but a rather stronger statement: for certain choices of initial seed, every cluster variable is a Laurent expression in the variables from that initial seed having a special form.  This result, due to Fomin and Zelevinsky, is holds in the case that the cluster algebra has coefficients but in this section, we will concentrate only on cluster algebras of finite type without coefficients (for reasons that will become apparent later).  As such, we state a slightly simplified version of that theorem, for the no coefficients case.

Recall that to a square integer matrix $B$, we may associate a Cartan companion $A=A(B)$, by setting $a_{ii}=2$ and $a_{ij}=-\card{b_{ij}}$ if $i\neq j$.  Then in the no coefficients case, given a square skew-symmetrizable initial exchange matrix $B$, we have a Cartan companion associated to $B$ and so can associate to $B$ the root system $\Phi$ of $A(B)$.  We lose no generality by assuming $B$ yields an irreducible root system, or equivalently that the Dynkin diagram associated to $A(B)$ is connected\footnote{This reduction is universal in the literature, if rarely explicit.  It is justified by the observation that a finite type cluster algebra associated to a reducible root system is the tensor product of the cluster algebras associated to each irreducible component and when considering gradings we obtain complete information from studying the constituent cluster algebras.}

\begin{theorem}[{\cite[Theorem~1.9]{FZ-CA2}}]\label{t:FZ-Laurent-finite-type} Let $(\underline{x},B)$ be an initial seed for a cluster algebra $\curly{A}=\curly{A}(\underline{x},B)$ of finite type, such that $b_{ij}b_{ik}\geq 0$ for all $i,j,k$.  Then there is a unique bijection $\alpha \mapsto X[\alpha]$ between the almost positive roots $\Phi_{\geq 1}$ in $\Phi$ and the cluster variables in $\curly{A}$ such that, for each $\alpha \in \Phi_{\geq 1}$, the cluster variable $X[\alpha]$ is expressed in terms of the initial cluster $\underline{x}$ as 
\[ X[\alpha] = \frac{P_{\alpha}(\underline{x})}{\underline{x}^{\alpha}}, \]
where $P_{\alpha}$ is a polynomial over $\integ$ with non-zero constant term.  Under this bijection, $X[-\alpha_{i}]=X_{i}$.
\end{theorem}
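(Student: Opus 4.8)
The plan is to reduce the whole statement to the behaviour of \emph{denominator vectors} under mutation and to match their piecewise-linear dynamics with a known root-theoretic model. First I would invoke the Laurent phenomenon of \cite{FZ-CA1}: every cluster variable $X$ of $\curly{A}$ is a Laurent polynomial in the initial cluster $\underline{x}=(X_{1},\dotsc,X_{r})$. Hence I can write $X=P(\underline{x})/\underline{x}^{\underline{d}}$ with $\underline{d}\in\integ^{r}$ chosen so that $P$ is a genuine polynomial divisible by no $X_{i}$; equivalently $P$ has non-zero constant term. This $\underline{d}$ is the denominator vector of $X$, and the content of the theorem is precisely that, identifying $\integ^{r}$ with the root lattice via $\underline{\boldsymbol{e}}_{i}\leftrightarrow\alpha_{i}$, the assignment $X\mapsto\underline{d}$ is a bijection onto $\Phi_{\geq1}$ carrying $X_{i}$ to $-\alpha_{i}$. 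The normalisation $X[-\alpha_{i}]=X_{i}$ is immediate: $X_{i}=X_{i}/1$ has denominator vector $-\underline{\boldsymbol{e}}_{i}\leftrightarrow-\alpha_{i}$ with $P=1$ of constant term $1\neq0$.

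Next I would track how $\underline{d}$ transforms under a single mutation. Substituting the exchange relation $X_{k}'=\underline{x}^{\underline{b}_{k}^{+}}+\underline{x}^{\underline{b}_{k}^{-}}$ into the Laurent expansion of an arbitrary cluster variable and clearing, one extracts a piecewise-linear recursion for denominator vectors, governed by the $\max$ appearing in the matrix $E$. The hypothesis $b_{ij}b_{ik}\geq0$ says the initial seed is bipartite: the index set splits as $I=I_{+}\sqcup I_{-}$ so that each $i$ is a source or a sink. This lets me organise the mutations into two half-steps $\mu_{+}=\prod_{k\in I_{+}}\mu_{k}$ and $\mu_{-}=\prod_{k\in I_{-}}\mu_{k}$, and to recognise the induced maps on denominator vectors as the piecewise-linear involutions $\tau_{+},\tau_{-}$ on the root lattice introduced in \cite{FZ-CA2}, whose composition is a deformation of a Coxeter element. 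Proving that the denominator recursion agrees \emph{on the nose} with $\tau_{\pm}$ is the technical heart of the argument.

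Finally I would match the set of denominator vectors with $\Phi_{\geq1}$, consisting of the positive roots of $\Phi$ together with the negative simple roots $-\alpha_{i}$. Here I would use the finite type classification of \cite{FZ-CA2}: in finite type the half-step dynamics is periodic, the orbit of the negative simple roots under $\langle\tau_{+},\tau_{-}\rangle$ is exactly $\Phi_{\geq1}$, and the number of almost positive roots equals the number of cluster variables. Combining this count with injectivity---distinct cluster variables have distinct denominator vectors---yields the bijection, and uniqueness follows because the correspondence is forced on $\underline{x}$ via $X_{i}\mapsto-\alpha_{i}$ and then determined on all of $\curly{A}$ by the mutation recursion.

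The main obstacle is controlling the piecewise-linear, $\max$-based denominator recursion: a priori the entries $d_{i}$ need not be sign-coherent, so one must prove that the vectors produced are honest roots with the predicted signs. This calls for an inductive positivity argument exploiting the bipartite hypothesis, showing that between the two negative-simple-root configurations every intermediate denominator vector is a positive root, so that the $\max$ operations linearise consistently and the identification with $\tau_{\pm}$ is valid; finiteness of the type is what guarantees the dynamics stays bounded and closes up, delivering the bijection with $\Phi_{\geq1}$.
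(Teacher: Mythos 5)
The paper offers no proof of this statement: it is quoted verbatim as Theorem~1.9 of \cite{FZ-CA2} and used as a black box, so there is no internal argument to compare yours against. Judged on its own terms, your sketch follows the genuine Fomin--Zelevinsky strategy (denominator vectors, the bipartite half-mutations, the piecewise-linear maps $\tau_{\pm}$ and their orbit on the almost positive roots), but it contains one concrete error and one unsupported step.

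The error is in your opening reduction: you assert that ``$P$ is a genuine polynomial divisible by no $X_{i}$'' is \emph{equivalent} to ``$P$ has non-zero constant term''. It is not --- $X_{1}+X_{3}$ is divisible by neither $X_{1}$ nor $X_{3}$ yet has zero constant term --- and the paper points at exactly this failure immediately after the theorem: for the linear (non-bipartite) orientation of $A_{3}$ the exchange relation $X_{2}X_{2}'=X_{1}+X_{3}$ produces a numerator with vanishing constant term, even though it is not divisible by any initial variable. So the non-zero constant term is not a free consequence of the Laurent phenomenon plus a normalisation of the denominator vector; it is additional content of the theorem, it can genuinely fail for other seeds, and it is precisely where the hypothesis $b_{ij}b_{ik}\geq 0$ must be used. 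Your later remarks about the ``technical heart'' concern only the identification of the denominator recursion with $\tau_{\pm}$ and do not repair this, because your first paragraph has already (incorrectly) discharged the constant-term claim. Separately, you invoke ``distinct cluster variables have distinct denominator vectors'' to obtain injectivity, but in finite type this is itself a non-trivial output of the Fomin--Zelevinsky machinery rather than an input one may assume; as used, it makes the counting argument circular. The proposal is a reasonable roadmap to the original proof, but not a proof.
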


As before, for $\alpha=\sum_{i=1}^{r} a_{i}\alpha_{i}$ (the expansion of $\alpha$ in the basis of simple roots $\{ \alpha_{i} \mid 1\leq i \leq r \}$) we have $\underline{x}^{\alpha} \defeq \prod_{i=1}^{r} X_{i}^{a_{i}}$.

We note that the condition on $B$, namely that $b_{ij}b_{ik}\geq 0$, corresponds in the simply laced case to choosing a quiver with a source-sink orientation on the underlying Dynkin diagram associated to $A(B)$.  Such an orientation exists because $A(B)$ is of finite type, but this condition is satisfied by a larger class of matrices than just these.  A seed with $B$ satisfying this condition is called \emph{bipartite}.

It is well-known that all orientations of a Dynkin diagram are mutation equivalent but not all orientations yield the particular form of Laurent polynomial in the above theorem: taking the linear orientation of the Dynkin diagram of type $A_{3}$, the exchange relation at the middle node is of the form $X_{2}X_{2}'=X_{1}+X_{3}$ and we do not have a non-zero constant term.

But to classify gradings on a cluster algebra, we may choose any convenient initial seed and so we choose a bipartite seed.  The following is then immediate from the above theorem.

\begin{corollary}\label{c:deg-as-fn-of-pos-roots} Let $\curly{A}(\underline{x},B,G)$ be a graded cluster algebra of finite type such that the seed $(\underline{x},B)$ is bipartite.  Then for each $\alpha \in \Phi_{\geq 0}$, expressed in the basis of simple roots as $\alpha=\sum_{i=1}^{r} a_{i}\alpha_{i}$, we have 
\[ \deg_{G}(X[\alpha])=-\alpha G=(-\sum_{i=1}^{r} a_{i}G_{i1},\dotsc ,-\sum_{i=1}^{r} a_{i}G_{id}). \]
\end{corollary}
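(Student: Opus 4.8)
The plan is to combine the bijection from Theorem~\ref{t:FZ-Laurent-finite-type} with the elementary fact that the degree map $\deg_{G}$ is additive on products and subtractive on quotients of homogeneous elements. The key observation is that the almost positive roots $\Phi_{\geq 1}$ index the non-initial cluster variables via $\alpha \mapsto X[\alpha]$, while the negative simple roots $-\alpha_{i}$ index the initial variables via $X[-\alpha_{i}]=X_{i}$; together these give a bijection between $\Phi_{\geq 0}$ (I read this as $\Phi_{\geq 1}\cup\{-\alpha_{1},\dotsc,-\alpha_{r}\}$, the almost positive roots) and all cluster variables. So it suffices to compute $\deg_{G}(X[\alpha])$ in the two cases and check both are given by the single formula $-\alpha G$.

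First I would handle the initial variables. For $\alpha=-\alpha_{i}$, we have $X[\alpha]=X_{i}$, and by definition $\deg_{G}(X_{i})=G_{i}$, the $i$th row of $G$. On the other hand, writing $-\alpha_{i}=\sum_{j} a_{j}\alpha_{j}$ means $a_{j}=-\delta_{ij}$, so $-\alpha G = -(-\alpha_{i})G = \alpha_{i}G = G_{i}$. Thus the formula holds on the initial cluster. This case is really just unwinding the convention that $\deg_{G}(X_{i})$ is the $i$th row of $G$.

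Next I would treat a general $\alpha\in\Phi_{\geq 1}$. By Theorem~\ref{t:FZ-Laurent-finite-type}, $X[\alpha]=P_{\alpha}(\underline{x})/\underline{x}^{\alpha}$, and by Proposition~\ref{p:gradedCA} the variable $X[\alpha]$ is homogeneous. The denominator is the monomial $\underline{x}^{\alpha}=\prod_{i} X_{i}^{a_{i}}$, whose $\deg_{G}$ is $\sum_{i} a_{i}\deg_{G}(X_{i}) = \sum_{i} a_{i}G_{i} = \alpha G$ by additivity of the degree on monomials. Since $X[\alpha]$ is homogeneous of some degree $\underline{d}$ and equals $P_{\alpha}(\underline{x})\cdot(\underline{x}^{\alpha})^{-1}$, we get $\deg_{G}(X[\alpha]) = \deg_{G}(P_{\alpha}(\underline{x})) - \alpha G$. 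The crux is therefore to argue that $\deg_{G}(P_{\alpha}(\underline{x}))=0$, i.e.\ that the numerator polynomial is homogeneous of degree $0$. This follows because $P_{\alpha}$ has a non-zero constant term (a monomial of degree $0$ in the $X_{i}$): homogeneity of $X[\alpha]$ forces every monomial of $P_{\alpha}$ to contribute the same degree as the constant term, namely $0$, so $\deg_{G}(P_{\alpha}(\underline{x}))=\underline{0}$ and hence $\deg_{G}(X[\alpha])=-\alpha G$.

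The main obstacle is precisely this last point: making rigorous that the non-zero constant term pins the degree of the whole numerator to $0$. The argument is that a homogeneous Laurent polynomial is a sum of monomials all of the same multidegree; the presence of the constant term $c\neq 0$ in $P_{\alpha}$ produces, after division by $\underline{x}^{\alpha}$, a monomial $c\,\underline{x}^{-\alpha}$ of degree $-\alpha G$, and since $X[\alpha]$ is homogeneous (by Proposition~\ref{p:gradedCA}) all its monomials share this degree, forcing $\deg_{G}(X[\alpha])=-\alpha G$. Once this is established, expanding $-\alpha G$ coordinatewise as $-\alpha G=(-\sum_{i} a_{i}G_{i1},\dotsc,-\sum_{i} a_{i}G_{id})$ is immediate and completes the proof.
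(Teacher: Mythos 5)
Your proposal is correct and follows essentially the same route as the paper: both verify the formula directly on the negative simple roots, and for positive roots both combine the Fomin--Zelevinsky Laurent expression $X[\alpha]=P_{\alpha}(\underline{x})/\underline{x}^{\alpha}$ with the homogeneity of cluster variables (Proposition~\ref{p:gradedCA}) and use the non-zero constant term of $P_{\alpha}$ to force $\deg_{G}(P_{\alpha}(\underline{x}))=\underline{0}$. Your explicit bookkeeping $\deg_{G}(X[\alpha])=\deg_{G}(P_{\alpha}(\underline{x}))-\deg_{G}(\underline{x}^{\alpha})=0-\alpha G$ is, if anything, slightly cleaner on the sign than the paper's own phrasing.
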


\begin{proof} For $\alpha$ a negative simple root, the statement is immediate: the degree of $X_{i}=X[-\alpha_{i}]$ is by definition $G_{i}$ (the $i$th row of $G$).  

For the positive roots, we have from the theorem of Fomin--Zelevinsky above that $X[\alpha]=P_{\alpha}(\underline{x})/\underline{x}^{\alpha}$.  Then $X[\alpha]$ and $\underline{x}^{\alpha}$ are homogeneous with respect to $G$, the latter obviously so and the former by the fundamental property of graded cluster algebras (Proposition~\ref{p:gradedCA}).  So it follows that $P_{\alpha}(\underline{x})$ is homogeneous also and must in fact have degree $\underline{0}$ since it has non-zero constant term.

We deduce that $\deg_{G}(X[\alpha])=\deg_{G}(\underline{x}^{\alpha})=-\alpha G$, the negative sign arising because $\alpha$ is expressed in terms of the simple \emph{positive} roots, whereas $G_{i}$ is the degree of the corresponding negative simple root.  Note that in particular $X[\alpha_{i}]=-G_{i}=-X[-\alpha_{i}]=-\deg_{G}(X_{i})$.
\end{proof}

That is, in order to know the degree of a cluster variable in finite type with no coefficients, we need only know the almost positive root $\alpha$ to which it is associated and the initial grading $G$.  Then the degree in question is simply a very natural linear function in these, $-\alpha G$.  In the subsequent section, we will see a generalisation of this result, using cluster categories.

Furthermore, the results of the previous section show us that we may infer everything we wish to know from a standard grading and that then we may in fact use any initial seed we like, at the cost of altering the formula in the above Corollary by post-multiplication by some integer matrix.  Note though that in any case we certainly still obtain a linear expansion of the degrees in terms of the components of the positive root $\alpha$.

To complete our classification programme then, we calculate a basis for the kernel of an initial exchange matrix $B$ for some bipartite seed, form a standard multi-grading $G$ and calculate $-\alpha G$ for each positive root $\alpha$, using the well-known description of the sets of positive roots as sums of simple ones.

\subsection{Type $A$}\label{ss:gradings-in-type-A}

We take as initial cluster the set $\underline{x}=\{ X_{1},\dotsc ,X_{n} \} \subseteq \mathbb{Q}(X_{1},\dotsc ,X_{n})$ and initial exchange quiver $Q$ as follows:
\begin{center} 
\scalebox{1}{\begin{tikzpicture}

\node (1) at (0,2) {1};
\node (2) [right=of 1] {2};
\node (3) [right=of 2] {3};
\node (4) [right=of 3] {4};
\node (dots) [right=of 4] {$\cdots$};
\node (n) [right=of dots] {$n$};

\draw[->] (1) to (2);
\draw[<-] (2) to (3);
\draw[->] (3) to (4);
\draw[<-] (4) to (dots);
\draw[<-] (dots) to (n);

\end{tikzpicture}}
\end{center}
More precisely, we orient the Dynkin diagram of type $A_{n}$ with every odd-numbered vertex being a source and every even-numbered vertex a sink.

The exchange matrix $B(Q)$ associated to $Q$ is easily seen to have rank $n$ if $n$ is even and rank $n-1$ if $n$ is odd.  Hence if $n$ is even, the only grading is the zero grading.

Assume now that $n$ is odd.  Then the kernel of $B(Q)$ is spanned by the $n \times 1$ vector $G\in \integ^{n}$ with
\[ G_{i} = \begin{cases} 1 & \text{if}\ i\equiv 1 \bmod 4 \\ -1 & \text{if}\ i \equiv 3 \bmod 4 \\ 0 & \text{otherwise} \end{cases} \]

We deduce from our previous results that if $\alpha=\sum_{i=1}^{n} a_{i}\alpha_{i}$ is a positive root expressed in the basis of simple roots, then \[ \deg_{G}(X[\alpha])=-\alpha G=\sum_{j=1}^{\frac{n+1}{2}} (-1)^{j}a_{2j-1} \]
(Since $G$ induced a $\integ$-grading, this is of course an integer rather than a vector.)

Any positive root $\alpha$ in type $A_{n}$ is a sum of consecutive simple roots with multiplicity $1$, i.e. $\alpha=\sum_{j=1}^{l} \alpha_{i+j-1}$.  So we see that $\deg_{G}(X[\alpha])\in \{-1,0,1\}$ for all $\alpha \in \Phi_{\geq 0}$.  

Indeed it is straightforward to calculate the following distribution of degrees in type $A_{n}$ for $n$ odd:
\begin{itemize}
\item the number of cluster variables of degree $1$ is equal to $\frac{(n+1)(n+3)}{8}$,
\item the number of cluster variables of degree $0$ is equal to $\frac{(n-1)(n+3)}{4}$ and
\item the number of cluster variables of degree $-1$ is equal to $\frac{(n+1)(n+3)}{8}$.
\end{itemize}
These counts accord with the total number of cluster variables being the number of almost positive roots in a root system of type $A_{n}$, which is $\frac{n^{2}+3n}{2}$.

Let us say that a $\integ^{d}$-grading is \emph{balanced} if for all degrees $\underline{d} \in \integ^{d}$ there is a bijection between the variables of degree $\underline{d}$ and those of degree $-\underline{d}$.  (Note that this definition is valid in infinite types also.)

Then we observe that every grading for a type $A_{n}$ cluster algebra with no coefficients is balanced.  For even $n$, the zero grading is certainly balanced and for odd $n$, any initial grading vector is an integer multiple of $G$ and hence gives rise to a balanced grading.  Subsequently, we will explain why these gradings are balanced in terms of a property of the associated cluster category.

\subsection{Type $B$}\label{ss:gradings-in-type-B}

In type $B$, we have the following Dynkin diagram and associated Cartan matrix:
\begin{center}
\begin{tabular}{cc}
\scalebox{1}{\begin{tikzpicture}

\node (1) at (0,2) {$\bullet$};
\node (2) [right=of 1] {$\bullet$};
\node (3) [right=of 2] {$\bullet$};
\node (4) [right=of 3] {$\bullet$};
\node (n-1) [right=of 4] {$\bullet$};
\node (n) [right=of n-1] {$\bullet$};

\draw[-] (1) to (2);
\draw[-] (2) to (3);
\draw[-] (3) to (4);
\draw[dotted] (4) to (n-1);
\draw[double,double equal sign distance,-implies] (n-1) to (n);

\end{tikzpicture}} &
$\begin{pmatrix} 
 2 & -1 &  0 &  0 &        &    &    &    \\
-1 &  2 & -1 &  0 &        &    &    &    \\
 0 & -1 &  2 & -1 &        &    &    &    \\
   &    &    &    & \ddots &    &    &    \\
   &    &    &    &        &  2 & -1 &  0 \\
   &    &    &    &        & -1 &  2 & -1 \\
   &    &    &    &        &  0 & -2 &  2 
\end{pmatrix}$
\end{tabular}
\end{center}

We choose an exchange matrix $B$ whose Cartan companion is the above and is bipartite, specifically:
\[ \begin{pmatrix} 
 0 &  1 &  0 &  0 &        &    &    &    \\
-1 &  0 & -1 &  0 &        &    &    &    \\
 0 &  1 &  0 &  1 &        &    &    &    \\
   &    &    &    & \ddots &    &    &    \\
   &    &    &    &        &  0 &  1 &  0 \\
   &    &    &    &        & -1 &  0 & -1 \\
   &    &    &    &        &  0 &  2 &  0 
\end{pmatrix} \]
in the case that $n$ is odd, and the same but with the signs of the final rows reversed when $n$ is even.

Then this exchange matrix has rank $n$ when $n$ is even (and again we only have the zero grading) and rank $n-1$ when $n$ is odd.  

Assume now that $n$ is odd.  Then the kernel of $B^{T}$ is spanned by the $n \times 1$ vector $G\in \integ^{n}$ with
\[ G_{i} = \begin{cases} 2 & \text{if}\ i\equiv 1 \bmod 4,\ i<n \\ -2 & \text{if}\ i \equiv 3 \bmod 4,\ i<n \\ 1 & \text{if}\ i=n \\ 0 & \text{otherwise} \end{cases} \]

As above, we may deduce a (linear) formula for the degree of an arbitrary cluster variable and from this easily infer that the degrees in this case belong to the set $\{ -2, -1, 0 , 1 ,2 \}$.  The distribution of the degrees for this grading in type $B_{n}$ ($n$ odd) is (\cite{TBP})
\begin{itemize}
\item the number of cluster variables of degree $2$ is equal to $\frac{(n+1)(n-1)}{4}$,
\item the number of cluster variables of degree $1$ is equal to $\frac{n+1}{2}$,
\item the number of cluster variables of degree $0$ is equal to $\frac{(n+1)(n-1)}{4}$,
\item the number of cluster variables of degree $-1$ is equal to $\frac{n+1}{2}$ and 
\item the number of cluster variables of degree $-2$ is equal to $\frac{(n+1)(n-1)}{4}$.
\end{itemize}
These counts accord with the total number of cluster variables being the number of almost positive roots in a root system of type $B_{n}$, which is $n^2+n$.  We observe that this grading (and hence all gradings in type $B$) is again balanced.

\subsection{Type $C$}\label{ss:gradings-in-type-C}

In type $C$, we have the following Dynkin diagram and associated Cartan matrix:
\begin{center}
\begin{tabular}{cc}
\scalebox{1}{\begin{tikzpicture}

\node (1) at (0,2) {$\bullet$};
\node (2) [right=of 1] {$\bullet$};
\node (3) [right=of 2] {$\bullet$};
\node (4) [right=of 3] {$\bullet$};
\node (n-1) [right=of 4] {$\bullet$};
\node (n) [right=of n-1] {$\bullet$};

\draw[-] (1) to (2);
\draw[-] (2) to (3);
\draw[-] (3) to (4);
\draw[dotted] (4) to (n-1);
\draw[double,double equal sign distance,-implies] (n) to (n-1);

\end{tikzpicture}} &
$\begin{pmatrix} 
 2 & -1 &  0 &  0 &        &    &    &    \\
-1 &  2 & -1 &  0 &        &    &    &    \\
 0 & -1 &  2 & -1 &        &    &    &    \\
   &    &    &    & \ddots &    &    &    \\
   &    &    &    &        &  2 & -1 &  0 \\
   &    &    &    &        & -1 &  2 & -2 \\
   &    &    &    &        &  0 & -1 &  2 
\end{pmatrix}$
\end{tabular}
\end{center}

We choose an exchange matrix $B$ whose Cartan companion is the above and is bipartite, specifically:
\[ \begin{pmatrix} 
 0 &  1 &  0 &  0 &        &    &    &    \\
-1 &  0 & -1 &  0 &        &    &    &    \\
 0 &  1 &  0 &  1 &        &    &    &    \\
   &    &    &    & \ddots &    &    &    \\
   &    &    &    &        &  0 &  1 &  0 \\
   &    &    &    &        & -1 &  0 & -2 \\
   &    &    &    &        &  0 &  1 &  0 
\end{pmatrix} \]
in the case that $n$ is odd, and the same but with the signs of the final rows reversed when $n$ is even.

Again this exchange matrix has rank $n$ when $n$ is even (so we only have the zero grading) and rank $n-1$ when $n$ is odd.  

Assume now that $n$ is odd.  Then the kernel of $B^{T}$ is spanned by the $n \times 1$ vector $G\in \integ^{n}$ with
\[ G_{i} = \begin{cases} 1 & \text{if}\ i\equiv 1 \bmod 4,\ i<n \\ -1 & \text{if}\ i \equiv 3 \bmod 4,\ i<n \\ 1 & \text{if}\ i=n \\ 0 & \text{otherwise} \end{cases} \]

As above, we easily infer a formula for the degree of an arbitrary cluster variable and that the degrees in this case belong to the set $\{ -1, 0 , 1 \}$.  The distribution of the degrees for this grading in type $C_{n}$ ($n$ odd) is (\cite{TBP})
\begin{itemize}
\item the number of cluster variables of degree $1$ is equal to $\left(\frac{n+1}{2}\right)^{2}$,
\item the number of cluster variables of degree $0$ is equal to $\frac{(n+1)(n-1)}{2}$ and
\item the number of cluster variables of degree $-1$ is equal to $\left(\frac{n+1}{2}\right)^{2}$. 
\end{itemize}
We observe that this grading is again balanced, and thus so are all gradings in type $C$.

\subsection{Type $D$}\label{ss:gradings-in-type-D}

We now turn our attention to type $D$, taking as our initial quiver
\begin{center}
\scalebox{0.8}{\begin{tikzpicture}

\node (1) at (0,2) {1};
\node (2) [right=of 1] {2};
\node (3) [right=of 2] {3};
\node (dots) [right=of 3] {$\cdots$};
\node (n-2) [right=of dots] {$n-2$};
\node (n-1) [above right=of n-2] {$n-1$};
\node (n) [below right=of n-2] {$n$};

\draw[->] (1) to (2);
\draw[<-] (2) to (3);
\draw[->] (3) to (dots);
\draw[->] (dots) to (n-2);
\draw[->] (n-1) to (n-2);
\draw[->] (n) to (n-2);

\end{tikzpicture}}
\end{center}
when $n$ is even; the vertex $n-2$ is instead a source for $n$ odd.  The corresponding exchange matrix $B$ has rank $n-1$ if $n$ is odd and rank $n-2$ if $n$ is even, so we have non-zero gradings in all cases.  Indeed, the even $n$ case gives our first example of a $\integ^{2}$-grading.

For odd $n$, the kernel of $B^{T}$ is spanned by the vector $G$ with 
\[ G_{i} = \begin{cases} 0 & \text{if}\ i\leq n-2 \\ 1 & \text{if}\ i=n-1 \\ -1 & \text{if}\ i=n \end{cases} \]
Clearly the degree of any cluster variable $X[\alpha]$ with $\alpha=\sum_{i=1}^{n} a_{i}\alpha_{i}$ is $-\alpha G=a_{n}-a_{n-1}$.  From this and the well-known description of the positive roots in type $D$, we deduce that the degrees in this case belong to the set $\{-1,0,1\}$ and
\begin{itemize}
\item the number of cluster variables of degree $1$ is equal to $n$,
\item the number of cluster variables of degree $0$ is equal to $n(n-2)$ and
\item the number of cluster variables of degree $-1$ is equal to $n$. 
\end{itemize}

For even $n$, we know that it suffices to study the standard grading $G$.  This case breaks down further according to whether $n$ is congruent to $0$ or $2$ modulo 4.  If $n\equiv 0 \bmod 4$, $G$ is given by
\[ G_{i} = \begin{cases} (1,0) & \text{if}\ i\equiv 1 \bmod 4,\ i<n-1 \\ (0,0) & \text{if}\ i\ \text{even}, i<n-1 \\ (-1,0) & \text{if}\ i\equiv 3 \bmod 4,\ i<n-1 \\ (-1,1) & \text{if}\ i=n-1 \\ (0,-1) & \text{if}\ i=n \end{cases} \]
If $n\equiv 2 \bmod 4$, we have
\[ G_{i} = \begin{cases} (1,0) & \text{if}\ i\equiv 1 \bmod 4,\ i<n-1 \\ (0,0) & \text{if}\ i\ \text{even}, i<n-1 \\ (-1,0) & \text{if}\ i\equiv 3 \bmod 4,\ i<n-1 \\ (1,-1) & \text{if}\ i=n-1 \\ (0,1) & \text{if}\ i=n \end{cases} \]

Analysing the resulting formula $\deg_{G}(X[\alpha])=-\alpha G$, we see that the (multi-)degrees in this case belong to the set $(\{ -1, 0 , 1\} \cross \{ -1 , 0 , 1\}) \setminus \{ (1,1), (-1,-1)\}$.  The corresponding distribution of degrees is given in Table~\ref{table:degrees-for-D-2m}.  We note that the distribution does not in fact depend on the congruence of $n$ modulo $4$ and also that this bi-grading is also balanced: the number of variables of degree $\underline{d}$ is equal to the number of degree $-\underline{d}$.

\begin{table}
\begin{center}
\begin{tabular}{|r|c|c|c||c|} \hline
\rule[-0.5em]{0em}{1.5em}  & $-1$ & $0$ & $1$ & Total \\ \hline
\rule[-1em]{0em}{2.5em} $-1$ &  $0$  & $\frac{n^{2}-2n}{4}$ & $\frac{n}{2}$ & $\frac{n^{2}}{4}$ \\ \hline
\rule[-1em]{0em}{2.5em}  $0$ & $\frac{n}{2}$  & $\frac{n^{2}-2n}{2}$ & $\frac{n}{2}$ & $\frac{n^{2}}{2}$  \\ \hline
\rule[-1em]{0em}{2.5em}  $1$ & $\frac{n}{2}$  & $\frac{n^{2}-2n}{4}$ & $0$ & $\frac{n^{2}}{4}$ \\ \hline \hline
\rule[-0.5em]{0em}{1.5em} Total & $n$  & $n^{2}-2n$ & $n$ & $n^{2}$ \\ \hline
\end{tabular}
\end{center}
\caption{Distribution of degrees for type $D_{n}$, $n$ even: the $(a,b)$-entry of the table gives the number of cluster variables of degree $(a,b)$.\label{table:degrees-for-D-2m}}
\end{table}

\subsection{Types $E$, $F$ and $G$}\label{ss:gradings-in-type-E-F-G}

One easily checks that the exchange matrices of type $G_{2}$, $F_{4}$, $E_{6}$ and $E_{8}$ have maximal rank, so that the coefficient-free cluster algebras of these types admit no non-zero gradings.

However, exchange matrices of type $E_{7}$ have rank $6$, so we do have a grading in this case.  For the quiver
\begin{center}
\scalebox{0.8}{\begin{tikzpicture}

\node (1) at (0,2) {1};
\node (3) [right=of 1] {3};
\node (4) [right=of 3] {4};
\node (2) [above=of 4] {2};
\node (5) [right=of 4] {5};
\node (6) [right=of 5] {6};
\node (7) [right=of 6] {7};

\draw[->] (1) to (3);
\draw[->] (4) to (3);
\draw[->] (4) to (5);
\draw[->] (6) to (5);
\draw[->] (6) to (7);
\draw[->] (4) to (2);
\end{tikzpicture}}
\end{center}
we have the grading
\begin{center}
\scalebox{0.8}{\begin{tikzpicture}

\node (1) at (0,2) {$0$};
\node (3) [right=of 1] {$0$};
\node (4) [right=of 3] {$0$};
\node (2) [above=of 4] {$1$};
\node (5) [right=of 4] {$-1$};
\node (6) [right=of 5] {$0$};
\node (7) [right=of 6] {$1$};

\draw[->] (1) to (3);
\draw[->] (4) to (3);
\draw[->] (4) to (5);
\draw[->] (6) to (5);
\draw[->] (6) to (7);
\draw[->] (4) to (2);
\end{tikzpicture}}
\end{center}

By computer-aided calculation of the cluster variables in this case, we find that this grading has 
\begin{itemize}
\item 15 cluster variables in degree $1$,
\item 40 in degree $0$ and
\item 15 in degree $-1$.
\end{itemize}

This concludes our analysis of the finite type cases with no coefficients.  We now turn our attention to cluster categories and gradings on these, in order to mirror and extend the above results in the categorical setting.  In particular we will explain the repeated occurrences of balanced gradings.

\section{Graded cluster categories}\label{s:graded-cluster-categories}

We wish to lift the notion of a multi-graded cluster algebra to the setting of (generalised) cluster categories.  Doing so, we obtain categorical versions of the results of the previous sections and indeed gain further insight and generalisations.  A significant advantage to working with a cluster category is that it gives a global picture of the cluster combinatorics, which is particularly helpful when examining gradings.

We make use of recent results of Dominguez and Gei\ss\ (\cite{DominguezGeiss}), generalising earlier work of Caldero--Chapoton (\cite{CalderoChapoton}), Palu (\cite{Palu}) and others.  The following constructions are in the skew-symmetric and no coefficients settings but we are no longer assuming finite type.  We recap the necessary setup from \cite{DominguezGeiss} and adopt the conventions there.

\begin{definition} Let $\mathbb{K}$ be an algebraically closed field.  Let $\curly{C}$ be a triangulated 2-Calabi--Yau $\mathbb{K}$-category with suspension functor $\Sigma$ and let $T\in \curly{C}$ be a basic cluster-tilting object.  We will call the pair $(\curly{C},T)$ a generalised cluster category.
\end{definition}

Following the nomenclature of Assem--Dupont--Schiffler (\cite{ADS}), we might rather call $(\curly{C},T)$ a generalised \emph{rooted} cluster category, as the analogue of the initial seed is required to be part of the data, but for brevity we shall not.

Write $T=T_{1}\dsum \dotsm \dsum T_{r}$.  Setting $\Lambda=\op{\End{\curly{C}}{T}}$, the functor $E=\curly{C}(T,-)\colon \curly{C} \to \Lambda\text{-mod}$ induces an equivalence $\curly{C}/\text{add}(T)\to \Lambda\text{-mod}$.  We may also define an exchange matrix associated to $T$ by
\[ (B_{T})_{ij}=\dim \text{Ext}_{\Lambda}^{1}(S_{i},S_{j})-\dim \text{Ext}_{\Lambda}^{1}(S_{j},S_{i}) \]
Here the $S_{i}=E\Sigma^{-1}T_{i}/\text{rad}\,E\Sigma^{-1}T_{i}$, $i=1,\dotsc ,r$ are the simple $\Lambda$-modules.

Then for each $X\in \curly{C}$ there exists a distinguished triangle
\[ \bigdsum_{i=1}^{r} T_{i}^{m(i,X)} \to \bigdsum_{i=1}^{r} T_{i}^{p(i,X)} \to X \to \Sigma \left( \bigdsum_{i=1}^{r} T_{i}^{m(i,X)} \right) \]
Define the index of $X$ with respect to $T$, $\ind{T}{X}$, to be the integer vector with $\ind{T}{X}_{i}=p(i,X)-m(i,X)$.  By \cite[\S 2.1]{Palu}, $\ind{T}{X}$ is well-defined and taking $\mathbb{K}=\complex$ we have a cluster character
\begin{align*} C_{?}^{T} \colon \text{Obj}(\curly{C}) &\to \mathbb{Q}[X_{1}^{\pm 1},\dotsc ,X_{r}^{\pm 1}] \\
 X & \mapsto \underline{x}^{\ind{T}{X}}\sum_{\underline{e}} \chi(\mathrm{Gr}_{\underline{e}}(EX))\underline{x}^{B_{T}\cdot\underline{e}}
\end{align*}
Here $\mathrm{Gr}_{\underline{e}}(EX)$ is the quiver Grassmannian of $\Lambda$-submodules of $EX$ of dimension vector $\underline{e}$ and $\chi$ is the topological Euler characteristic.  We also use the same monomial notation ($\underline{x}^{\underline{a}}$) as previously.  We note that this formula generalises that of Fomin--Zelevinsky recalled in Theorem~\ref{t:FZ-Laurent-finite-type}.

We also recall that for any cluster-tilting object $U$ of $\curly{C}$ and for any $U_{k}$ an indecomposable summand of $U$, there exists a unique indecomposable object $U_{k}^{*}\not\iso U_{k}$ such that $U^{*}=(U/U_{k})\dsum U_{k}^{*}$ is again cluster-tilting and there exist non-split triangles
\[ U_{k}^{*} \to M \to U_{k} \to \Sigma U_{k}^{*} \qquad \text{and} \qquad U_{k} \to M' \to U_{k}^{*} \to \Sigma U_{k} \]
with $M,M' \in \text{add}(U/U_{k})$.  In the generality of our setting, this is due to Iyama and Yoshino (\cite{IyamaYoshino}).

The obvious definition of a graded generalised cluster category is then the following.

\begin{definition} Let $(\curly{C},T)$ be a generalised cluster category and let $G$ be an $r\cross d$ integer matrix such that $B_{T}G=0$.  We call the tuple $(\curly{C},T,G)$ a graded generalised cluster category.
\end{definition}

\begin{definition} Let $(\curly{C},T,G)$ be a graded generalised cluster category.  For any $X\in \curly{C}$, we define $\deg_{G}(X)=\ind{T}{X}G$.
\end{definition}

These definitions are justified by the following proposition.

\begin{proposition}\label{p:prop-of-gen-cc} Let $(\curly{C},T,G)$ be a graded generalised cluster category.
\begin{enumerate}[label=(\roman*)]
\item For all $X \in \curly{C}$, $\deg_{G}(X)$ is equal to the degree of $C_{X}^{T}\in \mathbb{Q}[X_{1}^{\pm 1},\dotsc ,X_{r}^{\pm 1}]$ where the latter is $\integ^{d}$-graded by $\deg_{G}(X_{i})=G_{i}$ (the $i$th row of $G$).
\item For any distinguished triangle in $\curly{C}$, $X\to Y \to Z \to \Sigma X$, we have
\[ \deg_{G}(Y)=\deg_{G}(X)+\deg_{G}(Z) \]
\item The degree $\deg_{G}$ is compatible with mutation in the sense that for every cluster-tilting object $U$ of $\curly{C}$ with indecomposable summand $U_{k}$ we have
\[ \deg_{G}(U_{k}^{*})=\deg_{G}(M)-\deg_{G}(U_{k})=\deg_{G}(M')-\deg_{G}(U_{k}) \]
where $U_{k}^{*}$, $M$ and $M'$ are as in the above description of mutation in $\curly{C}$.
\end{enumerate}
\end{proposition}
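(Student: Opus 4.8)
The proposition has three parts, and the natural strategy is to prove (ii) first, since it is really a statement about the index functional alone, then derive (iii) as an immediate corollary of (ii), and finally establish (i) by a direct degree computation on the cluster character formula.  The unifying observation is that $\deg_{G}(X) = \ind{T}{X}\,G$ is by definition a $\integ$-linear function of the index, so each part reduces to a known additivity or compatibility property of the index $\ind{T}{-}$.

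For part (ii), I would invoke the fact that the index is additive on distinguished triangles.  More precisely, given a triangle $X \to Y \to Z \to \Sigma X$ in $\curly{C}$, one has $\ind{T}{Y} = \ind{T}{X} + \ind{T}{Z}$; this is a standard property of the index in a 2-Calabi--Yau category with cluster-tilting object (established in \cite[\S 2.1]{Palu}, where well-definedness of the index is proved).  Right-multiplying by $G$ and using the definition $\deg_{G}(-) = \ind{T}{-}\,G$ gives
\[ \deg_{G}(Y) = \ind{T}{Y}\,G = (\ind{T}{X}+\ind{T}{Z})\,G = \deg_{G}(X)+\deg_{G}(Z), \]
as required.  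The only subtlety worth flagging is that additivity of the index holds on \emph{all} distinguished triangles in this generality, so I would simply cite Palu rather than reprove it.

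Part (iii) then follows by applying (ii) to the two exchange triangles recalled just before the statement, namely $U_{k}^{*} \to M \to U_{k} \to \Sigma U_{k}^{*}$ and $U_{k} \to M' \to U_{k}^{*} \to \Sigma U_{k}$.  Additivity on the first triangle yields $\deg_{G}(M) = \deg_{G}(U_{k}^{*}) + \deg_{G}(U_{k})$, and on the second $\deg_{G}(M') = \deg_{G}(U_{k}) + \deg_{G}(U_{k}^{*})$; rearranging each gives the two claimed expressions for $\deg_{G}(U_{k}^{*})$ and shows they agree.  This is purely formal once (ii) is in hand.

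The part I expect to require the most care is (i), the compatibility of $\deg_{G}$ with the grading on the Laurent polynomial ring.  Here I would compute the degree of the cluster character $C_{X}^{T} = \underline{x}^{\ind{T}{X}}\sum_{\underline{e}} \chi(\mathrm{Gr}_{\underline{e}}(EX))\,\underline{x}^{B_{T}\cdot\underline{e}}$ directly.  The leading monomial $\underline{x}^{\ind{T}{X}}$ has degree $\ind{T}{X}\,G = \deg_{G}(X)$ by definition, so the claim is equivalent to showing that every surviving term in the sum has this same degree, i.e.\ that $C_{X}^{T}$ is homogeneous.  The key computation is that each correction monomial $\underline{x}^{B_{T}\cdot \underline{e}}$ is homogeneous of degree $\underline{0}$: its degree is $(B_{T}\cdot\underline{e})^{T}G = \underline{e}^{T}(B_{T}^{T}G)$, which vanishes because $B_{T}^{T}G = 0$ is precisely the defining condition of a grading (note $B_{T}$ is skew-symmetric here, so $B_{T}^{T}G = -B_{T}G = 0$).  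Hence every monomial appearing in $C_{X}^{T}$ has degree equal to that of the leading term $\underline{x}^{\ind{T}{X}}$, namely $\deg_{G}(X)$, which establishes (i).  The main obstacle is thus simply bookkeeping the degree of the correction factor correctly and making explicit that homogeneity of the whole character is forced by the single relation $B_{T}G = 0$.
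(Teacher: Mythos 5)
Your parts (i) and (iii) match the paper: (i) is the same degree computation on the cluster character (every correction monomial $\underline{x}^{B_{T}\cdot\underline{e}}$ has degree $0$ because $B_{T}G=0$, so the degree of $C_{X}^{T}$ is that of $\underline{x}^{\ind{T}{X}}$), and (iii) is formally deduced from (ii) via the two exchange triangles, exactly as in the paper.

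The gap is in part (ii), and it is precisely the ``subtlety'' you chose to wave away. The index is \emph{not} additive on arbitrary distinguished triangles in a $2$-Calabi--Yau category with cluster-tilting object; the correct statement (\cite[Proposition~2.5(b)]{DominguezGeiss}, which is what the paper actually invokes) is
\[ \ind{T}{Y}=\ind{T}{X}+\ind{T}{Z}+B_{T}\cdot \dimvec_{\Lambda}(\ker E\alpha), \]
with a correction term that is in general non-zero. You can see this must be so: if the index were additive on all triangles, then applying it to the two exchange triangles $U_{k}^{*}\to M\to U_{k}\to \Sigma U_{k}^{*}$ and $U_{k}\to M'\to U_{k}^{*}\to \Sigma U_{k}$ would force $\ind{T}{M}=\ind{T}{M'}$, whereas these indices are the exponents of the two \emph{distinct} monomials appearing in an exchange relation (already in type $A_{2}$, $X_{1}X_{1}'=X_{2}+1$). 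It would also force $\ind{T}{X}+\ind{T}{\Sigma X}=0$ for all $X$, contradicting the formula $-B_{T}\cdot\dimvec_{\Lambda}(EX)=\ind{T}{X}+\ind{T}{\Sigma X}$ used later in the paper, and it would make the index descend to an injection-free identification of $K_{0}(\curly{C})$ with $\integ^{r}$, contradicting the computation $K_{0}(\curly{C})\iso \integ^{r}/\mathrm{Im}\,B_{T}$. So the statement you attribute to Palu is false as stated. The conclusion of (ii) is nevertheless correct, but only because the defect term lies in the image of $B_{T}$ and is therefore annihilated upon right-multiplication by $G$ --- exactly the same mechanism you correctly used in part (i). Reinstating that one line (cite the Dominguez--Gei\ss{} formula, multiply by $G$, kill the $B_{T}$-term) repairs the proof and brings it in line with the paper's argument.
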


\begin{proof} {\ }
\begin{enumerate}[label=(\roman*)]
\item\label{pf:prop-of-gen-cc-1} For $\alpha=(a_{1},\dotsc ,a_{r})\in \integ^{r}$ the degree of $\underline{x}^{\alpha}$ is $\sum_{i=1}^{r} a_{i}G_{i}=\alpha G$.  Hence for each $\underline{e}$, $\deg_{G}(\underline{x}^{B_{T}\cdot \underline{e}})=(B_{T}\cdot \underline{e})G=0$ since $B_{T}G=0$.  It follows that \[ \deg_{G}(C_{X}^{T})=\deg_{G}(\underline{x}^{\ind{T}{X}})=\ind{T}{X}G=\deg_{G}(X). \]
\item\label{pf:prop-of-gen-cc-2} By \cite[Proposition~2.5(b)]{DominguezGeiss}, for any distinguished triangle in $\curly{C}$, $X\stackrel{\alpha}{\to} Y \stackrel{\beta}{\to} Z \stackrel{\gamma}{\to} \Sigma X$,
\[ \ind{T}{Y}=\ind{T}{X}+\ind{T}{Z}+B_{T}\cdot \dimvec_{\Lambda}(\ker E\alpha) \]
Multiplying by $G$ on the right we immediately deduce that 
\[ \deg_{G}(Y)=\deg_{G}(X)+\deg_{G}(Z) \]
since $(B_{T}\cdot \dimvec_{\Lambda}(\ker E\alpha ))G=0$, similarly to \ref{pf:prop-of-gen-cc-1}.
\item This is immediate from the above description of mutation of cluster-tilting objects in $\curly{C}$ and \ref{pf:prop-of-gen-cc-2}.  We remark that $\deg_{G}(M)=\deg_{G}(M')$, which is the categorical version of the claim that all exchange relations in a graded cluster algebra are homogeneous. \qedhere
\end{enumerate}
\end{proof}

Observe that the degree of an object $X$ in a graded generalised cluster category is a linear function of $\ind{T}{X}$, namely $\ind{T}{X}G$.  This is a generalisation of Corollary~\ref{c:deg-as-fn-of-pos-roots} to arbitrary types, where the collection of vectors $\{ \ind{T}{X} \mid X\in \curly{C},\ X\ \text{indecomposable}\,\}$ replaces the set of almost positive roots.  Indeed, we may deduce Corollary~\ref{c:deg-as-fn-of-pos-roots} from well-known properties of cluster categories of finite type.

Given a triangulated category $\curly{A}$, we may form its Grothendieck group $K_{0}(\curly{A})$ as the group generated by isoclasses of objects, $[A]$ for $A\in \curly{A}$, modulo relations $[X]-[Y]+[Z]=0$ for every distinguished triangle $X\to Y\to Z\to \Sigma X$.

We have the following remarkable characterisation of gradings for generalised cluster categories:

\begin{proposition} The space of gradings for a generalised cluster category $(\curly{C},T)$ may be identified with the Grothendieck group $K_{0}(\curly{C})$.
\end{proposition}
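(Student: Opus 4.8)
The plan is to make the assertion precise by exhibiting a canonical isomorphism $K_{0}(\curly{C})\iso\integ^{r}/\Image{B_{T}}$ and then observing that a grading is exactly the data of a homomorphism out of this group. Two ingredients are already available. First, the defining triangle of the index gives $[X]=\sum_{i=1}^{r}\ind{T}{X}_{i}[T_{i}]$ in $K_{0}(\curly{C})$ for every $X$, so the classes $[T_{1}],\dotsc,[T_{r}]$ generate $K_{0}(\curly{C})$ and we obtain a surjection $\pi\colon\integ^{r}\onto K_{0}(\curly{C})$, $e_{i}\mapsto[T_{i}]$. Second, the index additivity formula of Dominguez--Gei\ss{} used in the proof of Proposition~\ref{p:prop-of-gen-cc}, namely $\ind{T}{Y}=\ind{T}{X}+\ind{T}{Z}+B_{T}\cdot\dimvec_{\Lambda}(\ker E\alpha)$, says precisely that the index is additive on triangles modulo the subgroup $\Image{B_{T}}$.

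From these I would build the isomorphism as a pair of mutually inverse maps. Define $\psi\colon K_{0}(\curly{C})\to\integ^{r}/\Image{B_{T}}$ by $[X]\mapsto\ind{T}{X}\bmod\Image{B_{T}}$; this is well defined on $K_{0}(\curly{C})$ exactly because of the additivity formula above. Composing, $\psi\circ\pi$ sends $e_{i}$ to $\ind{T}{T_{i}}=e_{i}$ modulo $\Image{B_{T}}$, so $\psi\circ\pi$ is the canonical projection and hence $\Ker{\pi}\subseteq\Image{B_{T}}$. For the reverse inclusion I would use the Iyama--Yoshino exchange triangles recalled above: applied to $T$ at the index $k$, the two triangles give $[M]=[T_{k}]+[T_{k}^{*}]=[M']$ in $K_{0}(\curly{C})$, and since the multiplicities in $M$ and $M'$ are the positive and negative parts of the $k$th column of $B_{T}$ this reads $\sum_{i}(B_{T})_{ik}[T_{i}]=0$, \ie $\pi(B_{T}e_{k})=0$. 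Thus $\Image{B_{T}}\subseteq\Ker{\pi}$, so $\Ker{\pi}=\Image{B_{T}}$ and $\pi$ induces the desired isomorphism $K_{0}(\curly{C})\iso\integ^{r}/\Image{B_{T}}$ with inverse $\psi$.

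It then remains to match gradings with homomorphisms. A grading is an $r\cross d$ matrix $G$ with $B_{T}G=0$, and I would observe that the assignment $e_{i}\mapsto G_{i}$ (the $i$th row) extends to a homomorphism $\integ^{r}/\Image{B_{T}}\to\integ^{d}$ precisely when it kills every column of $B_{T}$, \ie when $\sum_{i}(B_{T})_{ik}G_{i}=0$ for all $k$; by skew-symmetry of $B_{T}$ this is exactly the condition $B_{T}G=0$. Under $K_{0}(\curly{C})\iso\integ^{r}/\Image{B_{T}}$ this yields a natural bijection between $\integ^{d}$-gradings and $\Hom{\integ}{K_{0}(\curly{C})}{\integ^{d}}$. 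The finest, universal grading is then the identity homomorphism, \ie the assignment $X\mapsto[X]$ taking values in $K_{0}(\curly{C})$ itself: its grading matrix has $i$th row $[T_{i}]$, and its homogeneity relations are exactly $\Image{B_{T}}$. This is the sense in which the space of gradings is identified with $K_{0}(\curly{C})$, and it is the categorical counterpart of the standard grading of Lemma~\ref{l:chg-of-basis-grading}.

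The main obstacle is the equality $\Ker{\pi}=\Image{B_{T}}$, that is, the claim that the exchange triangles account for all relations in $K_{0}(\curly{C})$; the device that makes this painless is letting the index itself furnish the inverse map $\psi$, so that the work collapses to checking well-definedness and evaluating the two composites on the generators $e_{i}$. One subtlety worth flagging in the write-up is torsion: when $B_{T}$ has entries of absolute value at least $2$ the group $K_{0}(\curly{C})$ can be torsion (for the Kronecker case the isomorphism above gives $(\integ/2)^{2}$), so genuinely $\integ^{d}$-valued gradings detect only $\Hom{\integ}{K_{0}(\curly{C})}{\integ^{d}}$, whereas the universal grading valued in $K_{0}(\curly{C})$ retains this torsion information, consistently with the identification as stated.
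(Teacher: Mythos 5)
Your argument is correct, and it reaches the identification by a genuinely different route from the paper. The paper simply cites Palu's computation of the Grothendieck group: $K_{0}(\curly{C})$ is the quotient of the free abelian group on $[T_{1}],\dotsc ,[T_{r}]$ by the relations $[M]=[M']$ coming from exchange triangles, which it identifies with $\integ^{r}/\mathrm{Im}\,B_{T}$, and then declares this ``identified with'' $\mathrm{Ker}\,B_{T}$, the space of gradings. You instead reprove the needed presentation from ingredients already in the paper: surjectivity of $e_{i}\mapsto [T_{i}]$ from the index triangle, $\mathrm{Im}\,B_{T}\subseteq \mathrm{Ker}\,\pi$ from the Iyama--Yoshino exchange triangles, and the reverse inclusion by letting the index itself define the inverse map, well defined by the Dominguez--Gei\ss{} additivity formula. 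This buys self-containedness (no appeal to Palu's Grothendieck-group paper) at the cost of a somewhat longer argument; the paper buys brevity by outsourcing exactly the step you prove by hand. Your version is also sharper on the final identification: $\integ^{r}/\mathrm{Im}\,B_{T}$ and $\mathrm{Ker}\,B_{T}$ agree only up to torsion (they have the same rank by skew-symmetry, but the quotient can have torsion, as in your Kronecker example), so the precise statement is that $\integ^{d}$-gradings correspond to homomorphisms $K_{0}(\curly{C})\to\integ^{d}$ rather than to elements of $K_{0}(\curly{C})$ itself. The paper's phrasing glosses over this, and your remark flagging it is a genuine improvement rather than a deviation; it would be worth keeping in any write-up.
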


\begin{proof} Let $\curly{T}=\text{proj}\, \Lambda$ be the category of finitely generated projective $\Lambda$-modules.  Then $K_{0}(\curly{T})$ is free abelian on the basis $\{ [T_{i}] \}$.  By work of Palu (\cite{Palu-Groth-gp}), $K_{0}(\curly{C})$ is isomorphic to the quotient of $K_{0}(\curly{T})$ by all relations $[M]=[M']$ where
\[ U_{k}^{*} \to M \to U_{k} \to \Sigma U_{k}^{*} \qquad \text{and} \qquad U_{k} \to M' \to U_{k}^{*} \to \Sigma U_{k} \]
are the non-split triangles associated to mutation of cluster-tilting objects in $(\curly{C},T)$.  But the latter is easily identified with $\text{Im}\, B_{T}$.  That is, $K_{0}(\curly{C})$ may be identified with $\text{Ker}\, B_{T}$, the space of gradings.
\end{proof}

Notice that this implies, for example, that the Grothendieck group of the cluster category of type $A_{2m}$ is trivial.  Similarly $K_{0}(\curly{C}_{A_{2m+1}})\iso \integ$.  These calculations of the Grothendieck groups in finite types had been found by Barot--Kussin--Lenzing (\cite{BKL}).

However we obtain even more from the categorical approach.  Recall that $\curly{C}$ has a suspension (or shift) functor $\Sigma$ that is an automorphism of $\curly{C}$.  This additional symmetry of $\curly{C}$ induces the following property of $\deg_{G}$.

\begin{lemma} For each $X\in \curly{C}$, $\deg_{G}(\Sigma X)=-\deg_{G}(X)$.  

That is, for each $\underline{d}\in \integ^{d}$, $\Sigma$ induces a bijection between the objects of $\curly{C}$ of degree $\underline{d}$ and those of degree $-\underline{d}$.
\end{lemma}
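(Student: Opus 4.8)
The plan is to sidestep any direct computation of $\ind{T}{\Sigma X}$ and instead exploit the additivity of $\deg_{G}$ on distinguished triangles established in part~\ref{pf:prop-of-gen-cc-2} of Proposition~\ref{p:prop-of-gen-cc}, applied to a single well-chosen degenerate triangle. Recall that in any triangulated category the identity morphism of $X$ sits in a distinguished triangle $X \xrightarrow{\id} X \to 0 \to \Sigma X$ (the cone on the identity is the zero object). Rotating this once via the rotation axiom produces a distinguished triangle of the form $X \to 0 \to \Sigma X \to \Sigma X$, whose three terms are $X$, $0$ and $\Sigma X$ and whose connecting map lands in $\Sigma X$ as required. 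This is the triangle I would feed into the additivity statement.

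Applying part~\ref{pf:prop-of-gen-cc-2} of Proposition~\ref{p:prop-of-gen-cc} to $X \to 0 \to \Sigma X \to \Sigma X$, with $X$, $0$ and $\Sigma X$ in the roles of the three objects, immediately gives $\deg_{G}(0) = \deg_{G}(X) + \deg_{G}(\Sigma X)$. It then remains only to observe that the zero object has vanishing degree: its defining index triangle may be taken with $m(i,0) = p(i,0) = 0$ for all $i$, so that $\ind{T}{0} = 0$ and hence $\deg_{G}(0) = \ind{T}{0}\,G = 0$. Substituting yields $\deg_{G}(\Sigma X) = -\deg_{G}(X)$, which is the first assertion of the lemma.

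For the second assertion I would argue that $\Sigma$ is an auto-equivalence of $\curly{C}$ and therefore induces a bijection on isomorphism classes of objects. By the first part this bijection sends each object of degree $\underline{d}$ to one of degree $-\underline{d}$; its inverse $\Sigma^{-1}$ likewise reverses degrees, by applying the first part to $\Sigma^{-1} X$ in place of $X$. Restricting these mutually inverse maps to the relevant degree classes therefore gives the desired bijection between the objects of degree $\underline{d}$ and those of degree $-\underline{d}$.

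There is no genuine obstacle here; the entire content lies in identifying the correct triangle, and the argument is short once it is found. The two points warranting a word of care are the verification that the rotated degenerate triangle is genuinely distinguished (a direct application of the rotation axiom) and the remark that the index, hence the degree, of the zero object vanishes. A more computational alternative would be to track how the index triangle defining $X$ behaves under $\Sigma$, but that forces one to re-express $\Sigma X$ through the summands $T_{i}$ rather than through $\Sigma T_{i}$, which is precisely the bookkeeping the triangle-additivity argument allows us to avoid.
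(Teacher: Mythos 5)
Your proof is correct, but it takes a genuinely different route from the paper's. The paper's proof quotes a separate identity from Dominguez--Gei{\ss} (their Proposition~2.5(a)), namely $-B_{T}\cdot \dimvec_{\Lambda}(EX)=\ind{T}{X}+\ind{T}{\Sigma X}$, and post-multiplies by $G$, killing the left-hand side because $B_{T}G=0$. You instead deduce the statement from the triangle-additivity of $\deg_{G}$ already established in Proposition~\ref{p:prop-of-gen-cc}(ii), applied to the rotated degenerate triangle $X\to 0\to \Sigma X\to \Sigma X$, together with the observation that $\ind{T}{0}=0$. Both steps of your argument are sound: the rotation axiom does produce that distinguished triangle, part~(ii) is stated for arbitrary distinguished triangles, and the zero object has zero index. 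In fact your argument silently re-derives the very identity the paper cites: plugging the degenerate triangle into the index formula of Dominguez--Gei{\ss} Proposition~2.5(b) (the one underlying part~(ii)), with $\alpha$ the zero map so that $\ker E\alpha = EX$, recovers $0=\ind{T}{X}+\ind{T}{\Sigma X}+B_{T}\cdot\dimvec_{\Lambda}(EX)$, i.e.\ exactly Proposition~2.5(a). So the two proofs rest on the same underlying computation; what your version buys is self-containedness relative to the paper (no additional external citation is needed beyond what Proposition~\ref{p:prop-of-gen-cc} already uses), at the cost of having to verify that the degenerate rotated triangle is genuinely distinguished, which you correctly flag. Your handling of the second assertion, via $\Sigma$ being an auto-equivalence with degree-reversing inverse $\Sigma^{-1}$, matches what the paper leaves implicit.
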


\begin{proof} By \cite[Proposition~2.5(a)]{DominguezGeiss} we have that
\[ -B_{T}\cdot \dimvec_{\Lambda}(EX)=\ind{T}{X}+\ind{T}{\Sigma X} \]
from which the claim follows similarly to above, by post-multiplication by $G$.
\end{proof}

Note that as a consequence any object $X$ for which $\Sigma^{2m+1}X=X$ must have degree $\underline{0}$.

Let us say that a cluster algebra $\curly{A}=\curly{A}(Q) \subseteq \mathbb{Q}[X_{1}^{\pm 1},\dotsc ,X_{r}^{\pm 1}]$ arising from a quiver $Q$ (with no coefficients) admits a cluster categorification $(\curly{C}(\curly{A}),T)$ if the cluster character \linebreak $C_{?}^{T}\colon \curly{C}(\curly{A}) \to \mathbb{Q}[X_{1}^{\pm 1},\dotsc ,X_{r}^{\pm 1}]$ is a bijection between indecomposable objects of $\curly{C}(\curly{A})$ and the cluster variables of $\curly{A}$.  In particular, by work of Palu (\cite{Palu}), if $Q$ is a finite connected\footnote{This restriction is again mild, similarly to before.} acyclic quiver (that is, $Q$ is mutation equivalent to a quiver having no oriented cycles) then its associated cluster algebra $\curly{A}(Q)$ admits a cluster categorification, with $\curly{C}(\curly{A})=\curly{C}_{Q}$ the usual cluster category introduced in \cite{BMRRT}.

\begin{corollary} Let $\curly{A}(Q)$ be a cluster algebra admitting a cluster categorification.  Then every grading for $\curly{A}(Q)$ is balanced. \qed
\end{corollary}

In particular, every grading in types $A$, $D$ and $E$ is balanced.  For type $B$, Buan, Marsh and Vatne (\cite{BMV}) have introduced a categorification, the cluster category associated to a tube, so this case is also explained.  Since by \cite{FZ-CA2}, the cluster variables in type $B_{n-1}$ and $C_{n-1}$ are \emph{both} in bijection with centrally-symmetric pairs of diagonals of a regular $2n$-gon and the aforementioned work of \cite{BMV} goes via this bijection also, we see that type $C$ is covered also.  That is, for every exchange matrix of finite type, every grading is balanced, explaining our previous observations of this fact.

\section{Tropical friezes}\label{s:tropical-friezes}

We recall the notion of a frieze pattern, as introduced by Conway and Coxeter (\cite{Coxeter}, \cite{ConwayCoxeter}).  A frieze pattern of order $n$ consists of $n-1$ infinite rows of positive integers, with the first and last rows consisting of only the integer $1$ and such that in a diamond of adjacent integers
\begin{center}
\begin{tikzpicture}[node distance=1cm,on grid]
\node (11) at (0,0) {$b$}; 
\node (21) [below left=of 11] {$a$};
\node (22) [below right=of 11] {$d$};
\node (31) [below left=of 22] {$c$};
\end{tikzpicture}
\end{center}
we have $ad-bc=1$.  The latter property is called the unimodular rule.

Following Fock--Goncharov (\cite{FockGoncharov}) and Propp (\cite{Propp}), Guo (\cite{Guo}) has studied \emph{tropical friezes} on generalised cluster categories.  A tropical frieze on $\curly{C}$ is a map $f\colon \curly{C} \to \integ$ which is constant on isomorphism classes, additive with respect to direct sums and where the unimodular rule is replaced by $a+d=\max(b+c,0)$.  More precisely, for all objects $U$ and $V$ of $\curly{C}$ with $\dim \text{Ext}_{\curly{C}}^{1}(U,V)=1$, $f(U)+f(V)=\max \{ f(M),f(M')\}$ where we have non-split triangles
\[ U \to M \to V \to \Sigma U \qquad \text{and} \qquad V \to M' \to U \to \Sigma V \]
Note that it is clear how to extend this definition to that of a ``multi-frieze'', that is when $f$ takes values in $\integ^{d}$ for $d\geq 1$.  We will continue to use the term ``frieze'' to encompass multi-friezes also.

In \cite{Guo}, Guo shows that the sum of two tropical friezes need not be again a tropical frieze and gives a necessary and sufficient condition for this to be the case.  Furthermore, she shows that if $Q$ is a Dynkin quiver with $n$ vertices and if $\curly{C}=\curly{C}_{Q}$ is its cluster category, then tropical friezes on $\curly{C}$ are in bijection with elements of $\integ^{n}$, by showing that each tropical frieze is determined by its values on the indecomposable summands of a basic cluster-tilting object and all choices are permitted.

The connection with the previous section of this work is as follows.  Let us say that a tropical frieze $f$ is \emph{exact} if for all objects $U$ and $V$ with $\dim \text{Ext}_{\curly{C}}^{1}(U,V)=1$ we have $f(U)+f(V)=f(M)=f(M')$ where we have non-split triangles involving $U$, $V$, $M$ and $M'$ as above.

\begin{lemma} Let $(\curly{C},T,G)$ be a graded generalised cluster category.  Then $\deg_{G}\colon \text{Obj}(\curly{C}) \to \integ^{d}$ is an exact tropical frieze on $\curly{C}$.  Conversely, if $f$ is an exact tropical frieze on $(\curly{C},T)$ then $(\curly{C},T,(f(T_{1}),\dotsc ,f(T_{r})))$ is a graded generalised cluster category.
\end{lemma}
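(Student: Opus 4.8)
The plan is to prove the two implications separately, each time reducing to results already available in this section, chiefly part~(ii) of Proposition~\ref{p:prop-of-gen-cc} and the Grothendieck-group computation above.

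First I would treat the forward direction. Suppose $(\curly{C},T,G)$ is a graded generalised cluster category and set $f=\deg_{G}$. Because $\deg_{G}(X)=\ind{T}{X}G$ and the index depends only on the isomorphism class of $X$, the map $f$ is constant on isomorphism classes. Additivity on direct sums follows at once from part~(ii) of Proposition~\ref{p:prop-of-gen-cc} applied to the split triangle $X\to X\dsum Z\to Z\to \Sigma X$, giving $\deg_{G}(X\dsum Z)=\deg_{G}(X)+\deg_{G}(Z)$. For the unimodular rule I would take objects $U,V$ with $\dim\text{Ext}_{\curly{C}}^{1}(U,V)=1$ and the accompanying non-split triangles $U\to M\to V\to \Sigma U$ and $V\to M'\to U\to \Sigma V$, and apply part~(ii) of Proposition~\ref{p:prop-of-gen-cc} to each; this gives $\deg_{G}(M)=\deg_{G}(U)+\deg_{G}(V)=\deg_{G}(M')$. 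In particular $f(M)=f(M')$, so $f(U)+f(V)=\max\{f(M),f(M')\}$ holds with the two arguments equal. This is precisely the defining condition of an \emph{exact} tropical frieze, so $\deg_{G}$ is one.

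For the converse, suppose $f$ is an exact tropical frieze on $(\curly{C},T)$ and let $G$ be the $r\cross d$ matrix with $i$th row $G_{i}=f(T_{i})$; I must verify $B_{T}G=0$. I would work one mutation direction at a time. Mutating $T$ at the summand $T_{k}$ produces $T_{k}^{*}$ with $\dim\text{Ext}_{\curly{C}}^{1}(T_{k}^{*},T_{k})=1$ and non-split triangles $T_{k}^{*}\to M_{k}\to T_{k}\to \Sigma T_{k}^{*}$ and $T_{k}\to M_{k}'\to T_{k}^{*}\to \Sigma T_{k}$, with $M_{k},M_{k}'\in \text{add}(T/T_{k})$. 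Exactness of $f$ gives $f(M_{k})=f(M_{k}')$, and since $M_{k}$ and $M_{k}'$ are direct sums of the $T_{i}$, additivity converts this into a linear relation among the rows $G_{i}$. To read it off I would invoke the identification from the Grothendieck-group proposition above, namely that in $K_{0}(\curly{T})=\bigdsum_{i}\integ[T_{i}]$ the class $[M_{k}]-[M_{k}']$ is the $k$th column of $B_{T}$, i.e.\ $[M_{k}]-[M_{k}']=\sum_{i}(B_{T})_{ik}[T_{i}]$. Viewing $f$ on $\text{add}\,T$ as the homomorphism $\integ^{r}\to \integ^{d}$, $\underline{e}_{i}\mapsto G_{i}$, the equalities $f(M_{k})=f(M_{k}')$ for all $k$ say exactly that $\sum_{i}(B_{T})_{ik}G_{i}=0$ for every $k$, that is $\transpose{B_{T}}G=0$. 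Since these constructions are in the skew-symmetric setting, $\transpose{B_{T}}=-B_{T}$, hence $B_{T}G=0$ and $(\curly{C},T,G)$ is a graded generalised cluster category.

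Both implications are short once part~(ii) of Proposition~\ref{p:prop-of-gen-cc} and the Grothendieck-group computation are in hand, since the real content---additivity of the degree along triangles---has already been supplied there. The step I expect to need the most care is the bookkeeping in the converse: matching the middle terms $M_{k},M_{k}'$ (equivalently the classes $[M_{k}]-[M_{k}']$) to the columns of $B_{T}$, and then passing between $\transpose{B_{T}}G=0$ and $B_{T}G=0$ via skew-symmetry. I do not anticipate any deeper obstacle.
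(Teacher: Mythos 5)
Your proof is correct and follows essentially the same route as the paper, whose own proof is just a two-line citation: the forward direction is exactly ``immediate from Proposition~\ref{p:prop-of-gen-cc}'' (you spell out parts (ii) and (iii)), and your converse---reading the exchange triangles at $T$ as the columns of $B_{T}$ and using skew-symmetry to pass from $\transpose{B_{T}}G=0$ to $B_{T}G=0$---is a correct unpacking of the paper's appeal to the Dominguez--Gei{\ss} index formula and the Grothendieck-group identification.
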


\begin{proof} The first claim is immediate from Proposition~\ref{p:prop-of-gen-cc}.  The converse follows from the result of \cite{DominguezGeiss} recalled in part \ref{pf:prop-of-gen-cc-2} of the proof of that Proposition.
\end{proof}

We note that exact tropical friezes are better behaved than tropical friezes generally: in particular, the sum of any two exact tropical friezes is again a tropical frieze.  Furthermore the exact tropical friezes are classified by means of linear algebra, by finding a basis for the kernel of the associated exchange matrix $B_{T}$.  We note that Guo has classified \emph{all} tropical friezes when $Q$ is Dynkin (\cite[Theorem~5.1]{Guo}) in categorical terms; the classification for exact tropical friezes is elementary but this applies to a restricted class of friezes.

We conclude this section with two examples of exact tropical friezes that illustrate the results of this and the two previous sections in types $A_{5}$ and $D_{4}$.  In both cases we give the Auslander--Reiten quiver of the associated cluster category (Figures~\ref{fig:AR-quiver-A5} and \ref{fig:AR-quiver-D4}) and the grading---or equivalently exact tropical frieze---corresponding to the choices of bipartite quiver $Q$ and grading $G$ in Sections~\ref{ss:gradings-in-type-A} and \ref{ss:gradings-in-type-D} respectively (Figures~\ref{fig:AR-quiver-A5-with-degrees} and \ref{fig:AR-quiver-D4-bigraded}).  We see not only the tropical unimodular rule in the additivity on meshes but also the sign-changing bijection induced by $\Sigma$ (which is perhaps more familiar as the Auslander--Reiten translation $\tau$).

\begin{figure}
\begin{center} 
\scalebox{1}{\input{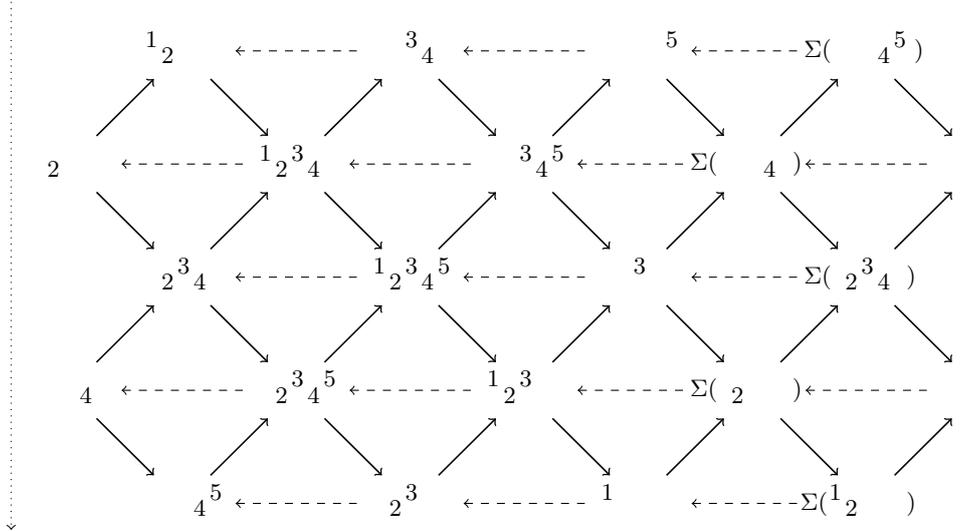}}
\end{center}
\caption{The Auslander--Reiten quiver for the cluster category of type $A_5$.  Each quiver representation is given in terms of its Loewy series, writing just ``$i$'' for the simple module $S_{i}$.  (Note that the left- and right-hand edges are to be identified at the dotted line, with a twist.)}\label{fig:AR-quiver-A5}
\vspace{1em}
\hrule
\end{figure}

\begin{figure}
\begin{center} 
\scalebox{0.9}{\input{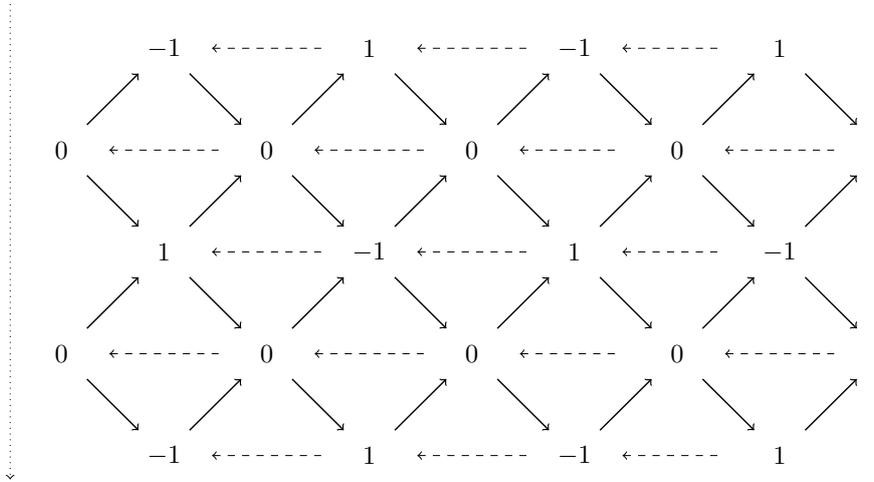}}
\end{center}
\caption{The Auslander--Reiten quiver for the cluster category of type $A_5$ with degrees replacing modules.}\label{fig:AR-quiver-A5-with-degrees}
\vspace{1em}
\hrule
\end{figure}

\begin{figure}
\begin{center} 
\scalebox{0.8}{\begin{tikzpicture}

\begin{scope}[node distance=0.3cm,on grid,align=center,text width=1em,font=\footnotesize]
\node (a1) at (0,0) {};
\node (a2) [right=of a1] {2};
\node (a3) [above right=of a2] {};
\node (a4) [below right=of a2] {};
\end{scope}

\begin{scope}[node distance=0.3cm,on grid,align=center,text width=1em,font=\footnotesize,xshift=1.5cm,yshift=1.5cm]
\node (b1) at (0,0) {};
\node (b2) [right=of b1] {2};
\node (b3) [above right=of b2] {3};
\node (b4) [below right=of b2] {};
\end{scope}

\begin{scope}[node distance=0.3cm,on grid,align=center,text width=1em,font=\footnotesize,xshift=1.5cm,yshift=0.5cm]
\node (c1) at (0,0) {1};
\node (c2) [right=of c1] {2};
\node (c3) [above right=of c2] {};
\node (c4) [below right=of c2] {};
\end{scope}

\begin{scope}[node distance=0.3cm,on grid,align=center,text width=1em,font=\footnotesize,xshift=1.5cm,yshift=-1.5cm]
\node (d1) at (0,0) {};
\node (d2) [right=of d1] {2};
\node (d3) [above right=of d2] {};
\node (d4) [below right=of d2] {4};
\end{scope}

\begin{scope}[node distance=0.3cm,on grid,align=center,text width=1em,font=\footnotesize,xshift=3cm,yshift=0cm]
\node (e1) at (0,0) {$1$};
\node (e2) [right=of e1] {$2^{2}$};
\node (e3) [above right=of e2,xshift=2pt] {$3$};
\node (e4) [below right=of e2,xshift=2pt] {$4$};
\end{scope}

\begin{scope}[node distance=0.3cm,on grid,align=center,text width=1em,font=\footnotesize,xshift=4.5cm,yshift=1.5cm]
\node (f1) at (0,0) {1};
\node (f2) [right=of f1] {2};
\node (f3) [above right=of f2] {};
\node (f4) [below right=of f2] {4};
\end{scope}

\begin{scope}[node distance=0.3cm,on grid,align=center,text width=1em,font=\footnotesize,xshift=4.5cm,yshift=0.5cm]
\node (g1) at (0,0) {};
\node (g2) [right=of g1] {2};
\node (g3) [above right=of g2] {3};
\node (g4) [below right=of g2] {4};
\end{scope}

\begin{scope}[node distance=0.3cm,on grid,align=center,text width=1em,font=\footnotesize,xshift=4.5cm,yshift=-1.5cm]
\node (h1) at (0,0) {1};
\node (h2) [right=of h1] {2};
\node (h3) [above right=of h2] {3};
\node (h4) [below right=of h2] {};
\end{scope}

\begin{scope}[node distance=0.3cm,on grid,align=center,text width=1em,font=\footnotesize,xshift=6cm,yshift=0cm]
\node (i1) at (0,0) {1};
\node (i2) [right=of i1] {2};
\node (i3) [above right=of i2] {3};
\node (i4) [below right=of i2] {4};
\end{scope}

\begin{scope}[node distance=0.3cm,on grid,align=center,text width=1em,font=\footnotesize,xshift=7.5cm,yshift=1.5cm]
\node (j1) at (0,0) {};
\node (j2) [right=of j1] {};
\node (j3) [above right=of j2] {3};
\node (j4) [below right=of j2] {};
\end{scope}

\begin{scope}[node distance=0.3cm,on grid,align=center,text width=1em,font=\footnotesize,xshift=7.5cm,yshift=0.5cm]
\node (k1) at (0,0) {1};
\node (k2) [right=of k1] {};
\node (k3) [above right=of k2] {};
\node (k4) [below right=of k2] {};
\end{scope}

\begin{scope}[node distance=0.3cm,on grid,align=center,text width=1em,font=\footnotesize,xshift=7.5cm,yshift=-1.5cm]
\node (l1) at (0,0) {};
\node (l2) [right=of l1] {};
\node (l3) [above right=of l2] {};
\node (l4) [below right=of l2] {4};
\end{scope}

\begin{scope}[node distance=0.3cm,on grid,align=center,text width=1em,font=\footnotesize,xshift=9cm]
\node (m0) at (-0.25cm,0) {$\Sigma($};
\node (m1) at (0,0) {};
\node (m2) [right=of m1] {2};
\node (m3) [above right=of m2] {};
\node (m4) [below right=of m2] {};
\node (m5) [below right=of m3,xshift=0.1cm] {$)$};
\end{scope}

\begin{scope}[node distance=0.3cm,on grid,align=center,text width=1em,font=\footnotesize,xshift=11cm,yshift=1.5cm]
\node (n0) at (-0.25cm,0) {$\Sigma($};
\node (n1) at (0,0) {};
\node (n2) [right=of n1] {2};
\node (n3) [above right=of n2] {3};
\node (n4) [below right=of n2] {};
\node (n5) [below right=of n3,xshift=0.1cm] {$)$};
\end{scope}

\begin{scope}[node distance=0.3cm,on grid,align=center,text width=1em,font=\footnotesize,xshift=11cm,yshift=0.5cm]
\node (o0) at (-0.3cm,0) {$\Sigma($};
\node (o1) at (0,0) {1};
\node (o2) [right=of o1] {2};
\node (o3) [above right=of o2] {};
\node (o4) [below right=of o2] {};
\node (o5) [below right=of o3,xshift=0.1cm] {$)$};
\end{scope}

\begin{scope}[node distance=0.3cm,on grid,align=center,text width=1em,font=\footnotesize,xshift=11cm,yshift=-1.5cm]
\node (p0) at (-0.25cm,0) {$\Sigma($};
\node (p1) at (0,0) {};
\node (p2) [right=of p1] {2};
\node (p3) [above right=of p2] {};
\node (p4) [below right=of p2] {4};
\node (p5) [below right=of p3,xshift=0.1cm] {$)$};
\end{scope}

\begin{scope}[node distance=0.3cm,on grid,align=center,text width=1em,font=\footnotesize,xshift=13cm]
\node (q1) at (0,0) {};
\end{scope}

\draw[->,semithick] (a3) to (b1);
\draw[->,semithick] (a2) to (c1);
\draw[->,semithick] (a4) to (d1);

\draw[->,semithick] (b4) to (e1);
\draw[->,semithick] (c2) to (e1);
\draw[->,semithick] (d3) to (e1);

\draw[->,semithick] (e3) to (f1);
\draw[->,semithick,shorten <=3pt] (e2) to (g1);
\draw[->,semithick] (e4) to (h1);

\draw[->,semithick] (f4) to (i1);
\draw[->,semithick] (g2) to (i1);
\draw[->,semithick] (h3) to (i1);

\draw[->,semithick] (i3) to (j1);
\draw[->,semithick] (i2) to (k1);
\draw[->,semithick] (i4) to (l1);

\draw[->,semithick,shorten >=10pt] (j4) to (m1);
\draw[->,semithick,shorten >=10pt] (k2) to (m1);
\draw[->,semithick,shorten >=10pt] (l3) to (m1);

\draw[->,semithick,shorten >=10pt] (m5) to (n1);
\draw[->,semithick,shorten >=10pt] (m5) to (o1);
\draw[->,semithick,shorten >=10pt] (m5) to (p1);

\draw[->,semithick] (n5) to (q1);
\draw[->,semithick] (o5) to (q1);
\draw[->,semithick] (p5) to (q1);

\end{tikzpicture}}
\end{center}
\caption{The Auslander--Reiten quiver for the cluster category of type $D_4$.  As usual, we have arranged the diagram so that modules in the same orbit under the shift are aligned on the same horizontal line.  (Note that the left- and right-hand edges are to be identified, in the sense that the arrows at the right edge should be regarded as pointing to the representation on the far left.)}\label{fig:AR-quiver-D4}
\vspace{1em}
\hrule
\end{figure}
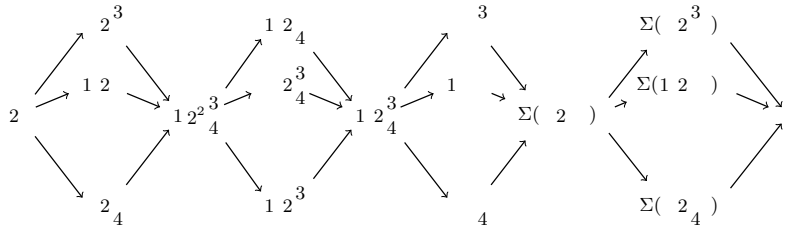

\begin{figure}
\begin{center} 
\scalebox{0.8}{\begin{tikzpicture}

\begin{scope}[node distance=0.3cm,on grid,align=center,font=\footnotesize]
\node (a1) at (0,0) {$(0,0)$};
%\node (a2) [right=of a1] {1};
%\node (a3) [above right=of a2] {0};
%\node (a4) [below right=of a2] {0};
\end{scope}

\begin{scope}[node distance=0.3cm,on grid,align=center,font=\footnotesize,xshift=1.5cm,yshift=1.5cm]
\node (b1) at (0,0) {$(1,-1)$};
%\node (b2) [right=of b1] {1};
%\node (b3) [above right=of b2] {1};
%\node (b4) [below right=of b2] {0};
\end{scope}

\begin{scope}[node distance=0.3cm,on grid,align=center,font=\footnotesize,xshift=1.5cm,yshift=0.5cm]
\node (c1) at (0,0) {$(-1,0)$};
%\node (c2) [right=of c1] {1};
%\node (c3) [above right=of c2] {0};
%\node (c4) [below right=of c2] {0};
\end{scope}

\begin{scope}[node distance=0.3cm,on grid,align=center,font=\footnotesize,xshift=1.5cm,yshift=-1.5cm]
\node (d1) at (0,0) {$(0,1)$};
%\node (d2) [right=of d1] {1};
%\node (d3) [above right=of d2] {0};
%\node (d4) [below right=of d2] {1};
\end{scope}

\begin{scope}[node distance=0.3cm,on grid,align=center,font=\footnotesize,xshift=3cm,yshift=0cm]
\node (e1) at (0,0) {$(0,0)$};
%\node (e2) [right=of e1] {2};
%\node (e3) [above right=of e2] {1};
%\node (e4) [below right=of e2] {1};
\end{scope}

\begin{scope}[node distance=0.3cm,on grid,align=center,font=\footnotesize,xshift=4.5cm,yshift=1.5cm]
\node (f1) at (0,0) {$(-1,1)$};
%\node (f2) [right=of f1] {1};
%\node (f3) [above right=of f2] {0};
%\node (f4) [below right=of f2] {1};
\end{scope}

\begin{scope}[node distance=0.3cm,on grid,align=center,font=\footnotesize,xshift=4.5cm,yshift=0.5cm]
\node (g1) at (0,0) {$(1,0)$};
%\node (g2) [right=of g1] {1};
%\node (g3) [above right=of g2] {1};
%\node (g4) [below right=of g2] {1};
\end{scope}

\begin{scope}[node distance=0.3cm,on grid,align=center,font=\footnotesize,xshift=4.5cm,yshift=-1.5cm]
\node (h1) at (0,0) {$(0,-1)$};
%\node (h2) [right=of h1] {1};
%\node (h3) [above right=of h2] {1};
%\node (h4) [below right=of h2] {0};
\end{scope}

\begin{scope}[node distance=0.3cm,on grid,align=center,font=\footnotesize,xshift=6cm,yshift=0cm]
\node (i1) at (0,0) {$(0,0)$};
%\node (i2) [right=of i1] {1};
%\node (i3) [above right=of i2] {1};
%\node (i4) [below right=of i2] {1};
\end{scope}

\begin{scope}[node distance=0.3cm,on grid,align=center,font=\footnotesize,xshift=7.5cm,yshift=1.5cm]
\node (j1) at (0,0) {$(1,-1)$};
%\node (j2) [right=of j1] {0};
%\node (j3) [above right=of j2] {1};
%\node (j4) [below right=of j2] {0};
\end{scope}

\begin{scope}[node distance=0.3cm,on grid,align=center,font=\footnotesize,xshift=7.5cm,yshift=0.5cm]
\node (k1) at (0,0) {$(-1,0)$};
%\node (k2) [right=of k1] {0};
%\node (k3) [above right=of k2] {0};
%\node (k4) [below right=of k2] {0};
\end{scope}

\begin{scope}[node distance=0.3cm,on grid,align=center,font=\footnotesize,xshift=7.5cm,yshift=-1.5cm]
\node (l1) at (0,0) {$(0,1)$};
%\node (l2) [right=of l1] {0};
%\node (l3) [above right=of l2] {0};
%\node (l4) [below right=of l2] {1};
\end{scope}

\begin{scope}[node distance=0.3cm,on grid,align=center,font=\footnotesize,xshift=9cm]
\node (m1) at (0,0) {$(0,0)$};
%\node (m2) [right=of m1] {1};
%\node (m3) [above right=of m2] {0};
%\node (m4) [below right=of m2] {0};
%\node (m5) [below right=of m3,xshift=0.1cm] {$[1]$};
\end{scope}

\begin{scope}[node distance=0.3cm,on grid,align=center,font=\footnotesize,xshift=10.5cm,yshift=1.5cm]
\node (n1) at (0,0) {$(-1,1)$};
%\node (n2) [right=of n1] {1};
%\node (n3) [above right=of n2] {1};
%\node (n4) [below right=of n2] {0};
%\node (n5) [below right=of n3,xshift=0.1cm] {$[1]$};
\end{scope}

\begin{scope}[node distance=0.3cm,on grid,align=center,font=\footnotesize,xshift=10.5cm,yshift=0.5cm]
\node (o1) at (0,0) {$(1,0)$};
%\node (o2) [right=of o1] {1};
%\node (o3) [above right=of o2] {0};
%\node (o4) [below right=of o2] {0};
%\node (o5) [below right=of o3,xshift=0.1cm] {$[1]$};
\end{scope}

\begin{scope}[node distance=0.3cm,on grid,align=center,font=\footnotesize,xshift=10.5cm,yshift=-1.5cm]
\node (p1) at (0,0) {$(0,-1)$};
%\node (p2) [right=of p1] {1};
%\node (p3) [above right=of p2] {0};
%\node (p4) [below right=of p2] {1};
%\node (p5) [below right=of p3,xshift=0.1cm] {$[1]$};
\end{scope}

\begin{scope}[node distance=0.3cm,on grid,align=center,font=\footnotesize,xshift=12cm]
\node (q1) at (0,0) {};
\end{scope}

\draw[->,semithick] (a1) to (b1);
\draw[->,semithick] (a1) to (c1);
\draw[->,semithick] (a1) to (d1);

\draw[->,semithick] (b1) to (e1);
\draw[->,semithick] (c1) to (e1);
\draw[->,semithick] (d1) to (e1);

\draw[->,semithick] (e1) to (f1);
\draw[->,semithick] (e1) to (g1);
\draw[->,semithick] (e1) to (h1);

\draw[->,semithick] (f1) to (i1);
\draw[->,semithick] (g1) to (i1);
\draw[->,semithick] (h1) to (i1);

\draw[->,semithick] (i1) to (j1);
\draw[->,semithick] (i1) to (k1);
\draw[->,semithick] (i1) to (l1);

\draw[->,semithick] (j1) to (m1);
\draw[->,semithick] (k1) to (m1);
\draw[->,semithick] (l1) to (m1);

\draw[->,semithick] (m1) to (n1);
\draw[->,semithick] (m1) to (o1);
\draw[->,semithick] (m1) to (p1);

\draw[->,semithick] (n1) to (q1);
\draw[->,semithick] (o1) to (q1);
\draw[->,semithick] (p1) to (q1);

\end{tikzpicture}}
\end{center}
\caption{The Auslander--Reiten quiver for the cluster category of type $D_4$, with bi-degrees corresponding to the grading bivector given in Section~\ref{ss:gradings-in-type-D}.}\label{fig:AR-quiver-D4-bigraded}
\vspace{1em}
\hrule
\end{figure}
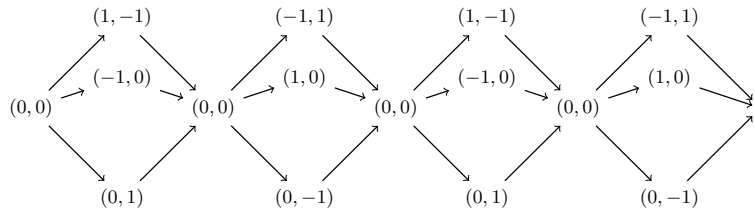

We have concentrated on the cluster category, for obvious reasons.  However we observe that the combinatorics described above can be extended to the bounded derived category $\curly{D}_{Q}$ associated to the category of finite dimensional $\complex Q$-modules.  By work of Happel (\cite{Happel}), the full subcategory of indecomposable objects of $\curly{D}_{Q}$ is equivalent to the mesh category of the repetition quiver $\integ Q$ (see \cite[Section~5]{Keller-CASurvey} for detailed definitions).  The Auslander--Reiten quiver of $\curly{D}_{Q}$ then takes the form of an infinite strip and in the case of type $A$, we may describe it as the quiver given by taking the finite strip corresponding to $\complex A_{n}$-modules repeated and reflected, as in Figure~\ref{fig:AR-quiver-D-A5} for type $A_{5}$.  

\begin{figure}
\begin{center} 
\scalebox{0.6}{\input{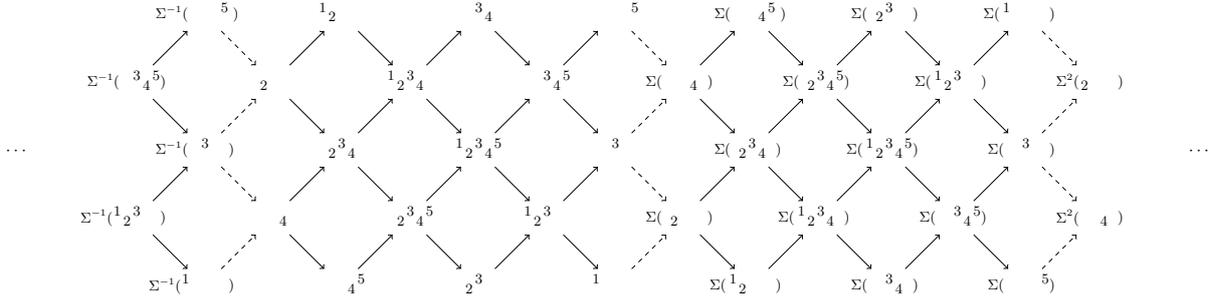}}
\end{center}
\caption{Part of the Auslander--Reiten quiver for the bounded derived category of type $A_5$.  (The morphisms going between the shifts of the $\complex A_{5}$-module category are indicated by dashed lines, to highlight the repetitive structure.)}\label{fig:AR-quiver-D-A5}
\vspace{1em}
\hrule
\end{figure}

Each repeated segment corresponds to an application of the suspension (or shift) functor of the mesh category, which we also denote by $\Sigma$.  The cluster category is constructed from the derived category by taking a certain quotient and inherits its own shift functor.  In the case at hand, this construction yields the M\"{o}bius strip of Figure~\ref{fig:AR-quiver-A5}.

We notice that if $n$ is odd, when we have the non-zero grading, the degree pattern above extends to give an exact tropical frieze pattern on the derived category---the shift functor reverses the parity of the degree, so it suffices to know the frieze pattern on $\curly{C}_{Q}$.  This is illustrated for $n=5$ in Figure~\ref{fig:AR-quiver-D-A5-with-degrees}.

\begin{figure}
\begin{center} 
\scalebox{0.6}{\input{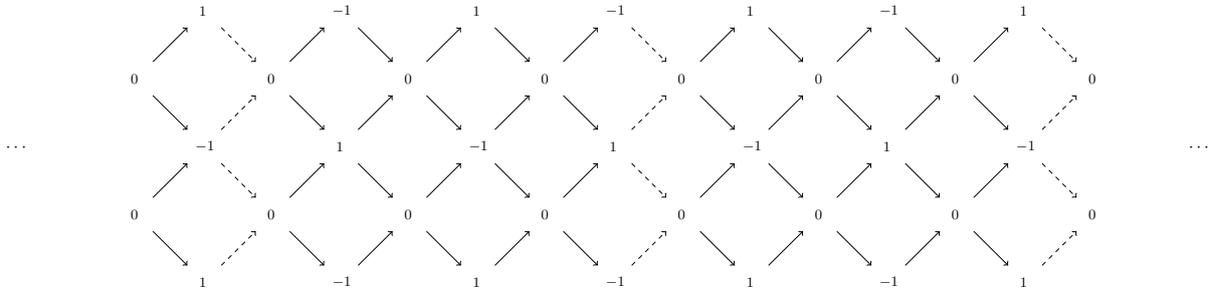}}
\end{center}
\caption{Part of the Auslander--Reiten quiver for the bounded derived category of type $A_5$, with degrees replacing modules.}\label{fig:AR-quiver-D-A5-with-degrees}
\vspace{1em}
\hrule
\end{figure}

We recall that we have no non-zero grading in the case of $A_{n}$ for $n$ even.  We observe that the derived category does admit a tropical frieze pattern similarly to the odd case, but this does not descend to the cluster category when $n$ is even: going from the derived category in type $A_{4}$ to the cluster category would see the objects $4$ and $\Sigma(\begin{tikzpicture}[baseline=0em] \begin{scope}[node distance=0.3cm,on grid,align=center,text width=1em,font=\footnotesize]
\node (mp1) at (0,0) {2};
\node (mp3) [above right=of mp1] {3};
\end{scope} \end{tikzpicture})$ being given degrees $-1$ and $0$, an inconsistency since these objects are isomorphic in the cluster category.

\section{Homogenisation}\label{s:homogenisation}

Given an initial seed $(\underline{x}=(X_{1},\dotsc ,X_{r}),B)$ and an arbitrary $r\cross d$ integer matrix $G$, we will not typically have a multi-graded seed.  However, we can make a modification to the initial data in order to homogenise this and produce a related cluster algebra of the same cluster algebra type that is multi-graded.  

\begin{lemma}\label{l:homog} Let $\curly{A}=\curly{A}(\underline{x},B)$ be a cluster algebra and let $G$ be an $r\cross d$ integer matrix.  Then there exists a multi-graded cluster algebra structure on a subalgebra $\curly{A}^{\text{hom}}$ of the field of rational functions $\mathbb{Q}(X_{1},\dotsc ,X_{r},h_{1},\dotsc ,h_{d})$ in which the elements $h_{i}$ are coefficients.  Furthermore the quotient cluster algebra obtained by setting $h_{i}=1$ for all $i$ is isomorphic (as a cluster algebra) to $\mathcal{A}$.
\end{lemma}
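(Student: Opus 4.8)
The plan is to homogenise by adjoining $d$ frozen variables $h_{1},\dotsc ,h_{d}$ and choosing the new rows of the exchange matrix so that they exactly absorb the obstruction $B^{T}G$ to $G$ being a grading. Concretely, I would set $\curly{A}^{\text{hom}}=\curly{A}(\underline{x}^{\text{hom}},B^{\text{hom}},G^{\text{hom}})$ with cluster $\underline{x}^{\text{hom}}=(X_{1},\dotsc ,X_{r},h_{1},\dotsc ,h_{d})$, where the $h_{j}$ are declared to be coefficients, and
\[ B^{\text{hom}}=\begin{pmatrix} B \\ -\transpose{G}B \end{pmatrix}, \qquad G^{\text{hom}}=\begin{pmatrix} G \\ I_{d} \end{pmatrix}. \]
The top rows of $B^{\text{hom}}$ are the original mutable rows, so its principal part is exactly $B$, while $G^{\text{hom}}$ records $\deg_{G^{\text{hom}}}(X_{i})=G_{i}$ and $\deg_{G^{\text{hom}}}(h_{j})=\underline{\boldsymbol{e}}_{j}$. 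The new block $-\transpose{G}B$ is integer-valued since $G$ and $B$ are, so the data is legitimate.

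First I would check that this is a graded seed. A direct computation gives
\[ \transpose{(B^{\text{hom}})}G^{\text{hom}}=B^{T}G+\transpose{(-\transpose{G}B)}I_{d}=B^{T}G-B^{T}G=0, \]
so $(\underline{x}^{\text{hom}},B^{\text{hom}},G^{\text{hom}})$ is a multi-graded seed and Proposition~\ref{p:gradedCA} immediately yields that $\curly{A}^{\text{hom}}$ is a $\integ^{d}$-graded cluster algebra, proving the first assertion.

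For the second assertion I would analyse the specialisation $\pi\colon h_{j}\mapsto 1$. Because the principal part of $B^{\text{hom}}$ equals $B$, the exchange relation at each mutable index $k$ in $\curly{A}^{\text{hom}}$ is obtained from the corresponding relation of $\curly{A}$ by multiplying the two exchange monomials by Laurent monomials in the $h_{j}$, whose exponents are recorded by the frozen block $-\transpose{G}B$; setting every $h_{j}=1$ deletes these factors and returns the exchange relation of $\curly{A}$. Since mutation of an exchange matrix only involves its principal part together with the rule $B'=EBE^{T}$, and the principal parts agree, $\pi$ intertwines the two mutation processes. An induction on distance from the initial seed in the common exchange graph then shows that $\pi$ carries each cluster of $\curly{A}^{\text{hom}}$ to the corresponding cluster of $\curly{A}$, so that in the quotient the images of the cluster variables generate subject to precisely the exchange relations of $\curly{A}$, giving $\curly{A}^{\text{hom}}/(h_{1}-1,\dotsc ,h_{d}-1)\iso \curly{A}$.

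The matrix identity is routine; the main obstacle is making the second assertion precise as an isomorphism of cluster algebras rather than a mere ring-level surjection. The two delicate points are that $\pi$ is well defined on $\curly{A}^{\text{hom}}$, which follows from the Laurent phenomenon since every cluster variable is a Laurent polynomial that may be evaluated at $h_{j}=1$, and that $\pi$ commutes with mutation at every seed. Both reduce to the single observation that the frozen rows never enter the mutation of the principal part, so the combinatorics — and hence the exchange graphs — of $\curly{A}^{\text{hom}}$ and $\curly{A}$ coincide.
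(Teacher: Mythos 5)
Your construction coincides exactly with the paper's: the same homogenised seed $\underline{x}_{\text{hom}}=(X_{1},\dotsc ,X_{r},h_{1},\dotsc ,h_{d})$, the same matrices $B_{\text{hom}}=\left(\begin{smallmatrix} B \\ -G^{T}B\end{smallmatrix}\right)$ and $G_{\text{hom}}=\left(\begin{smallmatrix} G \\ I\end{smallmatrix}\right)$, and the same verification that $(B_{\text{hom}})^{T}G_{\text{hom}}=B^{T}G-B^{T}G=0$. The paper simply asserts that setting all $h_{i}=1$ recovers $\curly{A}$ as ``clear''; your more detailed justification of that specialisation (via the agreement of principal parts and the intertwining of mutations) is correct and fills in what the paper leaves implicit.
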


\begin{proof} We claim that the following is valid initial data for a multi-graded cluster algebra structure:
\begin{itemize}
\item $\underline{x}_{\text{hom}}=(X_{1},\dotsc ,X_{r},h_{1},\dotsc ,h_{d})$;
\item $B_{\text{hom}}=\left( \begin{smallmatrix} B \\ -G^{T}B \end{smallmatrix} \right)$;
\item $G_{\text{hom}}=\left( \begin{smallmatrix} G \\ I \end{smallmatrix} \right)$, with $I=I_{d\cross d}$ the identity matrix.
\end{itemize}
Here we are using the natural block matrix notation.

We have
\[ (B_{\text{hom}})^{T}G_{\text{hom}}=\left( \begin{smallmatrix} B^{T} & (-G^{T}B)^{T} \end{smallmatrix}\right)\left( \begin{smallmatrix} G \\ I \end{smallmatrix} \right)=B^{T}G-B^{T}G=0 \]
so that we have a multi-graded seed.  Hence we may construct $\curly{A}^{\text{hom}}$ within the field of rational functions $\mathbb{Q}(X_{1},\dotsc ,X_{r},h_{1},\dotsc ,h_{d})$.

It is clear that taking the quotient setting all $h_{i}$ to $1$ recovers $\curly{A}$.
\end{proof}

\begin{example} A simple example is as follows, observing that we could fix the lack of a grading in type $A_{n}$ for even $n$.  Let us take the quiver $A_{n}$, $n$ even, with a linear orientation
\begin{center} 
\scalebox{1}{\begin{tikzpicture}

\node (1) at (0,2) {1};
\node (2) [right=of 1] {2};
\node (3) [right=of 2] {3};
\node (4) [right=of 3] {4};
\node (dots) [right=of 4] {$\cdots$};
\node (n) [right=of dots] {$n$};

\draw[<-] (1) to (2);
\draw[<-] (2) to (3);
\draw[<-] (3) to (4);
\draw[<-] (4) to (dots);
\draw[<-] (dots) to (n);

\end{tikzpicture}}
\end{center}
We can easily check that this quiver admits no non-zero grading.  Let $G=(0\ 1\ 0\ 1\ \dotsm\ 0\ 1)^{T}$.  Following the above homogenisation procedure, we see that we should add one coefficient corresponding to an additional vertex $0$ to the quiver to give the ice quiver
\begin{center} 
\scalebox{1}{\begin{tikzpicture}

\node (0) [left=of 1,rectangle,draw=black] {0};
\node (1) at (0,2) {1};
\node (2) [right=of 1] {2};
\node (3) [right=of 2] {3};
\node (4) [right=of 3] {4};
\node (dots) [right=of 4] {$\cdots$};
\node (n) [right=of dots] {$n$};

\draw[<-] (0) to (1);
\draw[<-] (1) to (2);
\draw[<-] (2) to (3);
\draw[<-] (3) to (4);
\draw[<-] (4) to (dots);
\draw[<-] (dots) to (n);

\end{tikzpicture}}
\end{center}
This admits the grading $G_{\text{hom}}=(1\ 0\ 1\ 0\ 1\ \dotsm\ 0\ 1)^{T}$ (where we use the natural ordering, writing the degree at vertex $0$ first, rather than at the end as per the definition of $G_{\text{hom}}$).  We obtain a (graded) cluster algebra $\curly{A}'=\curly{A}(\underline{x}_{\text{hom}},B_{\text{hom}},G_{\text{hom}})$ of type $A_{n}$.
\end{example}

\begin{remark} In general, this does not yield a multi-graded cluster algebra structure on the polynomial extension $\curly{A}[h_{1},\dotsc ,h_{d}]$ of $\curly{A}$, since the new coefficients are involved in the exchange relations.  (Clearly it does if $G$ was in fact a multi-grading to begin with, for then we are simply adding $d$ degree $\underline{0}$ disconnected coefficients.  But then we have no need to homogenise.)
\end{remark}

There is an second, equally natural, way to homogenise a seed.  We may take \emph{principal coefficients}, that is, extend $B$ by the $n\cross n$ identity matrix (or indeed any diagonal matrix with diagonal entries $\pm 1$, as we prefer), adding one row per mutable variable.  Then we simply set the degree of the $i$th new variable to be whatever is required to homogenise at the $i$th position---namely the sum of the degrees over arrows leaving the $i$th vertex minus the sum of the degrees over arrows entering that vertex, if we are in the quiver setting.  This corrects inhomogeneity one mutable variable at a time, whereas the above lemma fixes inhomogeneity one coordinate of the multi-degree at a time.  Each construction might be appropriate in certain circumstances.

\begin{example} We extend the cluster algebra without coefficients of type $A_{2}$ by adding principal coefficients, so that the initial exchange quiver becomes
\begin{center}
\scalebox{1}{\begin{tikzpicture}

\node (1) at (0,2) {$1$};
\node (2) [right=of 1] {$2$};
%\node (3) [right=of 2] {$3$};
%\node (4) [right=of 3] {$4$};

\draw[<-] (1) to (2);
%\draw[<-] (2) to (3);
%\draw[<-] (3) to (4);

\node (a) [below=of 1,rectangle,draw=black] {$1'$};
\node (b) [below=of 2,rectangle,draw=black] {$2'$};
%\node (c) [below=of 3,rectangle,draw=black] {$3'$};
%\node (d) [below=of 4,rectangle,draw=black] {$4'$};

\draw[->] (1) to (a);
\draw[->] (2) to (b);
%\draw[->] (3) to (c);
%\draw[->] (4) to (d);

\end{tikzpicture}}
\end{center}
This quiver admits non-zero gradings, in contrast to type $A_{2}$ with no coefficients.  The space of solutions to $B_{k}G=0$ for mutable indices $k$ is 2-dimensional, with basis
\[ \{ G=(1, 0, 0, -1),\  H=(0, 1, 1, 0) \}. \]
\noindent The associated standard $\integ^{2}$-grading is represented by the following diagram:
\begin{center}
\scalebox{1}{\begin{tikzpicture}

\node (1) at (0,2) {$(1,0)$};
\node (2) [right=of 1] {$(0,1)$};

\node (a) [below=of 1,rectangle,draw=black] {$(0,1)$};
\node (b) [below=of 2,rectangle,draw=black] {$(-1,0)$};

\draw[<-] (1) to (2);

\draw[->] (1) to (a);
\draw[->] (2) to (b);

\end{tikzpicture}}
\end{center}
Note that the frozen vertices $1'$ and $2'$ are exempted from the condition that the sums of the degrees at arrows entering and at arrows leaving the vertex are equal---we only require this at mutable vertices.

Let us take as our initial cluster $(X_{1},X_{2},X_{3},X_{4})$ (writing $X_{3}$ for $X_{1'}$ etc., for clarity).  Then the cluster variables and their degrees for the standard grading $(G,H)$, the two gradings $G$ and $H$ individually and their sum $G+H$ are as follows.

\begin{center}
\[ \begin{array}{c|ccccccc}
{} & X_{1} & X_{2} & X_{3} & X_{4} & \frac{X_{2}+X_{3}}{X_{1}} & \frac{X_{1}X_{4}+1}{X_{2}} & \frac{X_{2}+X_{3}+X_{1}X_{3}X_{4}}{X_{1}X_{2}}\rule[-0.75em]{0em}{0.75em} \\ \hline
(G,H) & (1,0) & (0,1) & (0,1) & (-1,0) & (-1,1) & (0,-1) & (-1,0) \\
G & 1 & 0 & 0 & -1 & -1 & 0 & -1 \\
H & 0 & 1 & 1 & 0 & 1 & -1 & 0 \\
G+H & 1 & 1 & 1 & -1 & 0 & -1 & -1
\end{array} \]
\end{center}

While we do again have the property of variables being concentrated in degrees $-1$, $0$ and $1$ for both gradings, as we had previously in type $A$, none of $(G,H)$, $G$ or $H$ is balanced.  There does exist a balanced grading however, namely $G+H$.
\end{example}

We conclude by observing that in both homogenisation constructions, i.e.\ that of Lemma~\ref{l:homog} and by taking principal coefficients, we may extend \emph{any} choice of $r\cross d$ matrix $G$ to a grading.  So once one fixes a coefficient pattern, the space of associated gradings is fixed but if one varies the coefficient pattern, one may obtain any grading one wants.  This justifies our earlier comment that properties of graded cluster algebras, e.g.\ classification, depend strongly on fixing a particular coefficient pattern, as opposed to being determined by the cluster algebra type alone.

\small

\bibliographystyle{amsplain}
\bibliography{biblio}\label{references}

\normalsize

\end{document}